\documentclass[11pt]{amsart}
\usepackage{amsmath,amssymb,amsthm}
\usepackage[margin=1.15in]{geometry}
\usepackage[colorlinks=true,linkcolor=blue,citecolor=blue]{hyperref}

\newtheorem{theorem}{Theorem}[section]
\newtheorem{proposition}[theorem]{Proposition}
\newtheorem{lemma}[theorem]{Lemma}
\newtheorem{corollary}[theorem]{Corollary}
\theoremstyle{definition}
\newtheorem{definition}[theorem]{Definition}

\theoremstyle{remark}
\newtheorem{remark}[theorem]{Remark}

\newcommand{\N}{\mathbb{N}}
\newcommand{\R}{\mathbb{R}}
\newcommand{\Q}{\mathbb{Q}}
\newcommand{\Z}{\mathbb{Z}}
\newcommand{\T}{\mathbb{T}}
\newcommand{\ud}{\underline{d}}
\newcommand{\od}{\overline{d}}
\newcommand{\ind}{\mathbf{1}}
\DeclareMathOperator{\supp}{supp}

\begin{document}

\title[An exponent-$s$ dynamical Borel-Cantelli lemma]{An exponent-$s$ dynamical Borel-Cantelli lemma and the waiting time problem}

\author{Du\v{s}an Bajovi\'c}
\address{University of Banja Luka \\ Faculty of Electrical Engineering \\ Bosnia and Herzegovina}
\email{dusan.bajovic@etf.unibl.org}

\author{Boris Petkovi\'c}
\address{University of Banja Luka \\ Faculty of Natural Sciencs and Mathematics \\ Bosnia and Herzegovina}
\email{boris.petkovic@pmf.unibl.org}

\subjclass[2020]{Primary 37A05, 37B20; Secondary 11K60, 37E10}
\keywords{Dynamical Borel-Cantelli lemma, shrinking targets, waiting time, hitting time, logarithm laws, Diophantine approximation}

\begin{abstract}
Galatolo and Kim proved that the dynamical Borel-Cantelli property for
decreasing sequences of balls is tightly connected with the waiting time
problem: in systems where all such sequences are Borel-Cantelli, the time
needed to enter a small ball $B$ for the first time scales as $\mu(B)^{-1}$,
and conversely waiting time estimates yield Borel-Cantelli results for
sequences of balls whose radii decrease in a controlled way. We extend this
correspondence to the exponent-$s$ setting introduced by Tseng: for $s\ge 1$,
the $s$-exponent monotone shrinking target property ($s$MSTP) requires the
Borel-Cantelli conclusion only for decreasing sequences of centered balls
satisfying the stronger divergence condition $\sum_n\mu(B_n)^s=\infty$. We
prove that $s$MSTP forces the lower waiting time exponent, measured on the
scale of $-\log\mu(B(y,r))$, to lie in the interval $[1,s]$ almost
everywhere. That a quantitative ($s$-strong) form of the property bounds the
upper exponent by $s$ and that, conversely, an exponent-$s$ waiting time
estimate implies the Borel-Cantelli property for decreasing sequences of
centered balls whose radii obey the calibrated decay condition matching the
critical divergence exponent $s$. We also obtain the corresponding
quantitative orbit approximation statement
$\liminf_n n^{\beta}\,d(T^nx,y)=0$ for
$\beta<1/(s\,\ud_\mu(y))$, show that the universal lower bound with exponent
$1$ pins the theory to $s\ge 1$, and discuss sharpness on circle rotations,
where by results of Kurzweil, Kim-Seo and Tseng the picture is governed by
the Diophantine type of the rotation number.
\end{abstract}

\maketitle

\section{Introduction}\label{sec:intro}

Let $(X,\mu)$ be a probability space and let $\{A_n\}_{n\in\N}$ be a sequence
of measurable subsets of $X$. The classical Borel-Cantelli lemma asserts
that if $\sum_n\mu(A_n)<\infty$ then $\mu(\limsup_n A_n)=0$, while if the
sets are independent and $\sum_n\mu(A_n)=\infty$ then
$\mu(\limsup_n A_n)=1$. In a measure preserving dynamical system $(X,T,\mu)$
one asks the analogous question for the sets $T^{-n}A_n$: writing
\[
\limsup_n T^{-n}A_n=\{x\in X : T^nx\in A_n \text{ for infinitely many } n\},
\]
a sequence $\{A_n\}$ is called a Borel-Cantelli (BC) sequence for
$T$ if $\mu(\limsup_n T^{-n}A_n)=1$. Since independence essentially never
holds along an orbit, the divergence half of the classical lemma fails in
general, and results guaranteeing that geometrically nice sequences with
divergent measure sums are BC are known as ynamical Borel-Cantelli
lemmas. When the sets are balls centered at a fixed point they are called shrinking targets after Hill and Velani \cite{HV}. Such lemmas have
been established in a wide variety of situations: for number-theoretic
expanding maps by Philipp \cite{Phi}, for toral translations by Kurzweil
\cite{Kur}, for Gibbs measures of expanding and hyperbolic maps by Chernov
and Kleinbock \cite{CK}, for partially hyperbolic systems by Dolgopyat
\cite{Dol}, for flows on homogeneous spaces by Kleinbock and Margulis
\cite{KM} (with logarithm laws as an application), for interval maps by Kim
\cite{KimIM}, and for inner functions by Fern\'andez, Meli\'an and Pestana
\cite{FMP}, among many others.

A closely related quantity is the waiting time (or hitting time). For
$A\subset X$ and $x\in X$ let
\[
\tau_A(x)=\min\{n\in\N : T^n x\in A\}
\]
be the first entrance time of the orbit of $x$ into $A$. In an ergodic
system $\tau_A$ is almost everywhere finite when $\mu(A)>0$, and in
sufficiently chaotic systems one expects $\tau_A(x)\approx \mu(A)^{-1}$ for
typical $x$ when $A$ is a small ball. One says that the waiting time
problem is satisfied for the target point $y$ if
\begin{equation}\label{eq:wtp}
\lim_{r\to 0}\frac{\log \tau_{B(y,r)}(x)}{-\log \mu(B(y,r))}=1
\qquad\text{for a.e.\ }x,
\end{equation}
in which case, if the local dimension $d_\mu(y)$ exists, the waiting time of
a ball of radius $r$ scales as $r^{-d_\mu(y)}$. This connects waiting times
with the dimension theory of the invariant measure and can be used for
numerical estimation of local dimension \cite{Gal1,Gal2,Gal3}. For circle
rotations the waiting time behavior was described by Kim and Seo \cite{KS}
in terms of the Diophantine type of the rotation number.

Galatolo and Kim \cite{GK} showed that the two circles of ideas are, in
fact, two faces of the same phenomenon. They proved a universal inequality,
valid in every measure preserving system,
\[
\liminf_{n\to\infty}\frac{\log\tau_{B_n}(x)}{-\log\mu(B_n)}\ \ge\ 1
\qquad\text{a.e.,}
\]
for every decreasing sequence of measurable sets $B_n$ with
$\mu(B_n)\to 0$. That in systems where all decreasing sequences of centered
balls are BC the inequality becomes an equality of lower limits, and under a
strong (quantitative) Borel-Cantelli assumption a full limit, and,
conversely, that waiting time information yields BC statements for
decreasing sequences of centered balls whose radii do not decrease too
fast. The latter mechanism produces Borel-Cantelli results in systems, such
as typical interval exchange transformations, which are not mixing and to
which no mixing-rate argument applies.

The present paper extends this correspondence to an exponent-$s$ setting. The motivation comes from elliptic dynamics. Kurzweil \cite{Kur}
proved that a toral translation has the monotone shrinking target
property (MSTP - every decreasing sequence of centered balls with divergent
measure sum is BC) if and only if the translation vector is badly
approximable, so for a typical rotation number the MSTP fails. Fayad
\cite{Fay} used a quantitative sharpening of this failure to produce a real
analytic mixing system without the MSTP, and Tseng \cite{Tse}, formalizing
the exponent implicit in Fayad's construction, introduced for $s\ge 1$ the
$s$-exponent monotone shrinking target property ($s$MSTP): every
decreasing sequence of centered balls $\{B_n\}$ with
\begin{equation}\label{eq:sdiv}
\sum_{n=1}^{\infty}\mu(B_n)^{s}=\infty
\end{equation}
is BC. Since $\mu(B_n)^s\le\mu(B_n)$, condition \eqref{eq:sdiv} is more
demanding than the divergence of $\sum\mu(B_n)$, so $s$MSTP is a weaker
property of the system. One has the (strict) tower of implications
$\mathrm{STP}\Rightarrow s\mathrm{MSTP}\Rightarrow t\mathrm{MSTP}$ for
$1\le s<t$ \cite{Tse}. Tseng's main theorem characterizes the circle
rotations possessing $s$MSTP by an explicit Diophantine condition of
exponent $s$ on the rotation number (see Section~\ref{sec:rotations}).

It is natural to ask which waiting time behavior corresponds to the
$s$MSTP, i.e.\ what replaces \eqref{eq:wtp} when the critical divergence
exponent in \eqref{eq:sdiv} is $s$ rather than $1$. The heuristic answer is
obtained by inspecting the borderline of \eqref{eq:sdiv}: the critical
sequences have $\mu(B_n)\approx n^{-1/s}$, and hitting the $n$-th target at
time $n$ then means $\tau_B\approx\mu(B)^{-s}$. Accordingly we say that $x$
satisfies the $s$-waiting time estimate at $y$ (along a sequence of
radii, or for all radii) if the corresponding limit in \eqref{eq:wtp}
equals, or is bounded by, $s$. Our results make the heuristic precise, in
both directions.

\subsection*{Main results}
Throughout, $(X,d)$ is a metric space, $\mu$ a Borel probability measure,
$T:X\to X$ measure preserving, and $B(y,r)=\{x : d(x,y)\le r\}$. The lower
and upper local dimensions of $\mu$ at $y$ are denoted $\ud_\mu(y)$ and
$\od_\mu(y)$. Our contributions are the following.

\begin{enumerate}
\item[(i)] The universal lower bound of \cite{GK} (recalled as
Proposition~\ref{prop:lower} below, together with a continuous-parameter
version, Corollary~\ref{cor:lowercont}) shows that in every system and at
every non-atomic point of the support the lower waiting time exponent, on
the scale of $-\log\mu(B(y,r))$, is at least $1$ almost everywhere. As a
consequence (Corollary~\ref{cor:noless1}) no system with a
non-atomic invariant measure can enjoy the property "\eqref{eq:sdiv}
$\Rightarrow$ BC" for an exponent $s<1$: the restriction $s\ge 1$ in the
definition of $s$MSTP is forced by ergodic-theoretic generalities.

\item[(ii)] If every decreasing sequence of balls centered at $y$
satisfying \eqref{eq:sdiv} is BC, then for a.e.\ $x$
\[
1\ \le\ \liminf_{r\to 0}\frac{\log\tau_{B(y,r)}(x)}{-\log\mu(B(y,r))}\ \le\ s
\]
(Theorem~\ref{thm:main-liminf}). For $s=1$ this recovers Theorem~2.4(i) of
\cite{GK}. For $s>1$ the two-sided estimate is best possible in the sense
that neither endpoint can be removed (Section~\ref{sec:rotations}).

\item[(iii)] Under a quantitative $s$-strong Borel-Cantelli
hypothesis, in which the almost sure number of hits
$S_N(x)=\#\{n\le N : T^nx\in A_n\}$ is asymptotic to
$\sum_{n\le N}\mu(A_n)^s$, the upper exponent is also bounded by $s$
(Theorem~\ref{thm:main-limsup}), generalizing Theorem~2.4(ii) of \cite{GK}.
We discuss the meaning and the strength of this hypothesis for $s>1$
(Remark~\ref{rem:ssbc}).

\item[(iv)] The $s$MSTP implies the quantitative orbit approximation
property
\[
\liminf_{n\ge 1}\, n^{\beta}\, d(T^n x,y)=0
\quad\text{for every }\beta<\frac{1}{s\,\ud_\mu(y)},
\qquad\text{a.e.\ }x
\]
(Theorem~\ref{thm:approx}), generalizing Theorem~2.7 of \cite{GK} and
complementing the universal upper bound of \cite{BGI}.

\item[(v)] Conversely, an exponent-$s$ waiting time estimate implies the BC
property for decreasing sequences of centered balls whose radii satisfy the
calibrated condition
$\limsup_n \frac{-\log r_n}{\log n}<\frac{1}{s\,\od_\mu(y)}$
(Theorem~\ref{thm:converse} and Corollary~\ref{cor:converse-ae}). Such
sequences satisfy \eqref{eq:sdiv}, so the converse operates exactly inside
the $s$-divergence class, closing the circle.

\item[(vi)] On circle rotations the results of Kurzweil, Kim-Seo and Tseng
make the whole picture explicit and show that the exponent $s$ in (ii) and
(iii) cannot be lowered (Section~\ref{sec:rotations}). 
\end{enumerate}

The proofs follow the strategy of \cite{GK}. The required combinatorial method is isolated
in a "window lemma" (Lemma~\ref{lem:windows}) which we state and prove in
detail, since several of its properties are used repeatedly and were left
implicit in \cite{GK}.

\subsection*{Organization} Section~\ref{sec:prelim} contains definitions
and the window lemma. Section~\ref{sec:lower} treats the universal lower
bound. Section~\ref{sec:direct} contains the direct implications
(ii)-(iii), Section~\ref{sec:approx} the approximation theorem (iv), and
Section~\ref{sec:converse} the converse (v).
Section~\ref{sec:rotations} discusses circle rotations and sharpness.

\section{Definitions and preliminaries}\label{sec:prelim}

Throughout the paper $(X,d)$ is a metric space, $\mu$ is a Borel probability
measure on $X$, and $T:X\to X$ is measurable and preserves $\mu$, i.e.\
$\mu(T^{-1}A)=\mu(A)$ for all Borel $A$. We write
\[
B(y,r)=\{x\in X : d(x,y)\le r\}
\]
for the closed ball of center $y$ and radius $r>0$. The closedness is used
only in Theorem~\ref{thm:main-limsup}, all other statements being
insensitive to the convention. The map $r\mapsto\mu(B(y,r))$ is
nondecreasing, and if $\mu(\{y\})=0$ then $\mu(B(y,r))\downarrow 0$ as
$r\downarrow 0$, since $\bigcap_{r>0}B(y,r)=\{y\}$. If moreover
$y\in\supp\mu$ then $\mu(B(y,r))>0$ for every $r>0$.

For $A\subset X$ measurable and $x\in X$ the waiting time of $x$ in
$A$ is
\[
\tau_A(x)=\min\{n\in\N : T^n x\in A\}\in\N\cup\{\infty\}.
\]
Note that $A\subset A'$ implies $\tau_{A}(x)\ge\tau_{A'}(x)$. The lower and
upper local dimensions of $\mu$ at $y$ are
\[
\ud_\mu(y)=\liminf_{r\to 0}\frac{\log\mu(B(y,r))}{\log r},
\qquad
\od_\mu(y)=\limsup_{r\to 0}\frac{\log\mu(B(y,r))}{\log r},
\]
and $d_\mu(y)$ denotes their common value when it exists.

\begin{definition}\label{def:bc}
A sequence $\{A_n\}_{n\in\N}$ of measurable subsets of $X$ is a Borel-Cantelli (BC) sequence for $T$ if
$\mu(\limsup_n T^{-n}A_n)=1$.
\end{definition}

\begin{definition}[Tseng \cite{Tse}; cf.\ Fayad \cite{Fay}]\label{def:smstp}
Let $s\ge 1$. The system $(X,T,\mu)$ has the $s$-exponent monotone
shrinking target property ($s$MSTP) if for every $y\in X$ and every
nonincreasing sequence of radii $(r_n)_{n\in\N}$ such that
$\sum_{n}\mu(B(y,r_n))^{s}=\infty$, the sequence $\{B(y,r_n)\}$ is BC for
$T$. When we wish to localize the property at a single center $y$ we say
that $T$ has $s$MSTP at $y$.
\end{definition}

For $s=1$ this is the classical MSTP. On the torus with Haar measure one has
$\mu(B(y,r))\asymp r^d$, so Definition~\ref{def:smstp} coincides (up to the
irrelevant multiplicative constant) with the radius formulation
$\sum_n r_n^{ds}=\infty$ used in \cite{Tse}. Although the definition makes
formal sense for any $s>0$, Corollary~\ref{cor:noless1} below shows that no
system with non-atomic invariant measure can have it for $s<1$, which is why
the restriction $s\ge 1$ is imposed.

\begin{definition}\label{def:ssbc}
Let $s\ge 1$ and let $\{A_n\}$ be a sequence of measurable sets with
$\sum_n\mu(A_n)^s=\infty$. Put
\[
S_N(x)=\sum_{n=1}^{N}\ind_{A_n}(T^n x)=\sum_{n=1}^{N}\ind_{T^{-n}A_n}(x).
\]
The sequence $\{A_n\}$ is an $s$-strong Borel-Cantelli
($s$-SBC) sequence if
\[
\lim_{N\to\infty}\frac{S_N(x)}{\sum_{n=1}^{N}\mu(A_n)^{s}}=1
\qquad\text{for a.e.\ }x.
\]
We say that $T$ has the $s$-strong monotone shrinking target property
at $y$ if every nonincreasing sequence of closed balls centered at $y$
with $\sum_n\mu(A_n)^s=\infty$ is $s$-SBC.
\end{definition}

For $s=1$ this is the strong Borel-Cantelli property of \cite{CK,GK}. For
$s>1$ the normalization is below the mean, since by measure
preservation $\mathbb{E}\,S_N=\sum_{n\le N}\mu(A_n)\ge\sum_{n\le
N}\mu(A_n)^s$, see Remark~\ref{rem:ssbc} for a discussion.

Finally we introduce the waiting time exponents. For $y\in\supp\mu$ with
$\mu(\{y\})=0$ and $x\in X$ set
\begin{equation}\label{eq:exponents}
\underline{R}(x,y)=\liminf_{r\to 0}
\frac{\log\tau_{B(y,r)}(x)}{-\log\mu(B(y,r))},
\qquad
\overline{R}(x,y)=\limsup_{r\to 0}
\frac{\log\tau_{B(y,r)}(x)}{-\log\mu(B(y,r))}.
\end{equation}
On this scale the waiting time problem \eqref{eq:wtp} reads
$\underline{R}=\overline{R}=1$ a.e., and the $s$-waiting time estimates
studied below are the inequalities $\underline{R}\le s$ and
$\overline{R}\le s$.

\subsection{The window lemma}

The following elementary lemma organizes the values of $r\mapsto
\mu(B(y,r))$ into ``windows'' at the scale $i^{-1/s}$ and is the
combinatorial backbone of the proofs in Sections \ref{sec:lower} and
\ref{sec:direct}. For $s=1$ its content is implicit in the proofs of
\cite[Theorem~2.4]{GK}.

\begin{lemma}[Window lemma]\label{lem:windows}
Let $\mu$ be a Borel probability measure on $(X,d)$, let $y\in\supp\mu$
with $\mu(\{y\})=0$, and let $s>0$. For $i\in\N$ define
\[
m(i)=\min\bigl\{m\in\N :\ \exists\, r>0 \text{ with }
(m+1)^{-1/s}\le\mu(B(y,r))<i^{-1/s}\bigr\}.
\]
Then:
\begin{enumerate}
\item[(a)] $m(i)$ is well defined and finite, $m(i)\ge i$, and $m$ is
nondecreasing.
\item[(b)] $m(m(i))=m(i)$ for every $i$, so the set
$I=\{i\in\N : m(i)=i\}$ is infinite. Moreover, for every $i$ there exists
$r'_i>0$ with
\[
(m(i)+1)^{-1/s}\ \le\ \mu(B(y,r'_i))\ <\ m(i)^{-1/s},
\]
and the radii $r'_i$ can be chosen nonincreasing in $i$, they then satisfy
$r'_i\downarrow 0$.
\item[(c)] Write $I=\{i_1<i_2<\cdots\}$ and put $i_0=0$. Then $m(j)=i_n$
whenever $i_{n-1}<j\le i_n$. Moreover, for every $n\ge 1$ there is no $r>0$
with
\[
\mu(B(y,r))\in\bigl[\,i_{n+1}^{-1/s},\,(i_n+1)^{-1/s}\,\bigr).
\]
Consequently, for every $r>0$ with $\mu(B(y,r))<i_1^{-1/s}$ there is a
unique $n\ge 1$ with
$\mu(B(y,r))\in[(i_n+1)^{-1/s},\,i_n^{-1/s})$.
\end{enumerate}
\end{lemma}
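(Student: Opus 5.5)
The plan is to work throughout with the nondecreasing function $f(r)=\mu(B(y,r))$. Since $\mu(\{y\})=0$ and $y\in\supp\mu$, we have $f(r)>0$ for all $r>0$ and $f(r)\downarrow 0$ as $r\downarrow0$ (Section~\ref{sec:prelim}). All three parts then reduce to elementary manipulations with the minimality in the definition of $m(i)$.

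For (a), I pick $r$ with $f(r)<i^{-1/s}$. Because $f(r)>0$ and $(m+1)^{-1/s}\to0$, some $m\in\N$ satisfies $(m+1)^{-1/s}\le f(r)$, so the defining set is nonempty and $m(i)$ is finite. Any admissible $m$ satisfies $(m+1)^{-1/s}<i^{-1/s}$, hence $m+1>i$ and $m\ge i$. For monotonicity, take $i\le j$: a witness $r$ for $m(j)$ has $f(r)<j^{-1/s}\le i^{-1/s}$, so the same $m$ is admissible for $i$, giving $m(i)\le m(j)$.

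For (b), let $M=m(i)$ with witness $r$, so $(M+1)^{-1/s}\le f(r)<i^{-1/s}$. I claim $f(r)<M^{-1/s}$. This is trivial if $M=i$. If $M>i$ and $f(r)\ge M^{-1/s}$, then $m=M-1\ge i\ge1$ would already be admissible for $i$, contradicting minimality. Hence $r$ witnesses $m(M)\le M$, and with (a) this gives $m(M)=M$. The same $r$ provides the required $r'_i$.

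Since $m(i)\in I$ and $m(i)\ge i$, the set $I$ is unbounded. To obtain monotone radii, I fix one such radius $\rho_k$ for each $k\in I$ and set $r'_i=\rho_{m(i)}$. For $k<k'$ in $I$ we have
\[
f(\rho_{k'})<k'^{-1/s}\le (k+1)^{-1/s}\le f(\rho_k),
\]
so $\rho_{k'}<\rho_k$ by monotonicity of $f$. Combined with $m$ nondecreasing, this makes $r'_i$ nonincreasing. If $r'_i\downarrow r_*>0$, then $f(r'_i)\ge f(r_*)>0$, contradicting $f(r'_i)<m(i)^{-1/s}\to0$. Hence $r'_i\downarrow0$.

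For (c), take $i_{n-1}<j\le i_n$. Monotonicity gives $m(j)\le m(i_n)=i_n$. Also $m(j)\in I$ by (b), and $m(j)\ge j>i_{n-1}$, so $m(j)=i_n$. For the gap statement, suppose some $r$ has $f(r)\in[i_{n+1}^{-1/s},(i_n+1)^{-1/s})$, and put $j=i_n+1$. Then $f(r)<j^{-1/s}$, and the value $m=i_{n+1}-1\ge1$ satisfies $(m+1)^{-1/s}\le f(r)$, so $m(j)\le i_{n+1}-1$. But $i_n<j\le i_{n+1}$ forces $m(j)=i_{n+1}$, a contradiction.

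The final consequence follows by partitioning. Because $i_n\to\infty$, the interval $(0,i_1^{-1/s})$ is the disjoint union over $n\ge1$ of the "windows" $[(i_n+1)^{-1/s},i_n^{-1/s})$ and the "gaps" $[i_{n+1}^{-1/s},(i_n+1)^{-1/s})$. By the previous step no value of $f$ lies in a gap, so each value of $f$ below $i_1^{-1/s}$ lies in exactly one window.

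The only delicate point is the claim $f(r)<M^{-1/s}$ in (b): the minimality argument there must use $M-1\ge i$ (so that $M-1\in\N$ is admissible), which is exactly why the case $M=i$ is split off. Everything else is routine bookkeeping.
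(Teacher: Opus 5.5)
Your proof is correct and essentially the same as the paper's: the same minimality arguments in (a) and (b), the same identification $m(j)=i_n$ in (c), and the same contradiction for $r'_i\downarrow 0$ (your explicit treatment of the case $m(i)=i$ in (b) makes precise a point the paper leaves implicit). The differences are minor. You choose the radii non-inductively as $r'_i=\rho_{m(i)}$, and for the gap claim you apply the first half of (c) to $j=i_n+1$ with admissible value $i_{n+1}-1$. The paper instead takes $j=\lceil v^{-s}\rceil-1$ and shows it would be an element of $I$ strictly between $i_n$ and $i_{n+1}$, and it uses the same ceiling device, rather than your partition into windows and gaps, for the final uniqueness statement.
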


\begin{proof}
(a) Since $y\in\supp\mu$ and $\mu(\{y\})=0$, the value $\mu(B(y,r))$ is
positive for every $r>0$ and tends to $0$ as $r\downarrow 0$. Hence there is
$r>0$ with $0<\mu(B(y,r))<i^{-1/s}$, and any $m$ with
$(m+1)^{-1/s}\le\mu(B(y,r))$ shows that the defining set is nonempty, so
$m(i)<\infty$. If $m$ and $r$ are as in the definition then
$(m+1)^{-1/s}\le\mu(B(y,r))<i^{-1/s}$ forces $m+1>i$, i.e.\ $m\ge i$. In
particular $m(i)\ge i$. For any $r>0$ such that $(m+1)^{-1/s}\le\mu(B(y,r))<(i+1)^{-1/s} \le i^{-1/s}$. Hence the admissible
set of integers $m$ for the index $i$ contains the one for $i+1$, and
$m(i)\le m(i+1)$.

(b) Let $r>0$ be such that
$(m(i)+1)^{-1/s}\le\mu(B(y,r))<i^{-1/s}$. By minimality of $m(i)$ there is
no radius $r'$ with $m(i)^{-1/s}\le\mu(B(y,r'))<i^{-1/s}$. Since $\mu(B(y,r))<i^{-1/s}$, necessarily $\mu(B(y,r))<m(i)^{-1/s}$. Thus
\begin{equation}\label{eq:property}
(m(i)+1)^{-1/s}\ \le\ \mu(B(y,r))\ <\ m(i)^{-1/s},
\end{equation}
and we may take $r'_i:=r$. The radius $r>0$ also satisfies
$m(m(i))\le m(i)$, since \eqref{eq:property} exhibits an $r$ with
$(m(i)+1)^{-1/s}\le\mu(B(y,r))<m(i)^{-1/s}$. Combined with (a) this gives
$m(m(i))=m(i)$. As $m(i)\ge i\to\infty$ and $m(i)\in I$, the set $I$ is
infinite.

For the monotone choice of the $r'_i$, argue inductively. If
$m(i+1)=m(i)$, the radius $r'_i$ also satisfies $\mu(B(y,r'_i))<m(i)^{-1/s}=m(i+1)^{-1/s}\le(i+1)^{-1/s}$, the last
inequality because $m(i+1)\ge i+1$ by (a). Take $r'_{i+1}:=r'_i$. If
$m(i+1)>m(i)$, then $m(i+1)\ge m(i)+1$ and
\[
\mu(B(y,r'_{i+1}))<m(i+1)^{-1/s}\le (m(i)+1)^{-1/s}\le\mu(B(y,r'_i)),
\]
which forces $r'_{i+1}<r'_i$ by the monotonicity of $r\mapsto\mu(B(y,r))$.
Finally, $\mu(B(y,r'_i))<m(i)^{-1/s}\le i^{-1/s}\to 0$. If the nonincreasing
sequence $r'_i$ did not tend to $0$ we would have $r'_i\ge\rho>0$ for all
$i$ and hence $\mu(B(y,r'_i))\ge\mu(B(y,\rho))>0$ (as $y\in\supp\mu$), a
contradiction. So $r'_i\downarrow 0$.

(c) Let $i_{n-1}<j\le i_n$. By (b), $m(j)\in I$. By (a),
$m(j)\ge j>i_{n-1}$ and $m(j)\le m(i_n)=i_n$. The only element of $I$ in
the interval $(i_{n-1},i_n]$ is $i_n$, so $m(j)=i_n$.

For the second claim, suppose towards a contradiction that some $r>0$
satisfies $\mu(B(y,r))\in[\,i_{n+1}^{-1/s},(i_n+1)^{-1/s})$. Write
$v=\mu(B(y,r))\in(0,1]$ and let $j=\lceil v^{-s}\rceil-1$, so that
$j<v^{-s}\le j+1$, i.e.
\[
(j+1)^{-1/s}\ \le\ v\ <\ j^{-1/s}.
\]
From $v<(i_n+1)^{-1/s}$ we get $v^{-s}>i_n+1$, hence
$j\ge\lceil v^{-s}\rceil-1\ge i_n+1>i_n$. From $v\ge i_{n+1}^{-1/s}$ we get
$v^{-s}\le i_{n+1}$, hence $j\le i_{n+1}-1<i_{n+1}$. For the radius $r$ then
one gets $m(j)\le j$, so by (a) $m(j)=j$ and $j\in I$ with
$i_n<j<i_{n+1}$, contradicting the fact that $i_n$ and $i_{n+1}$ are
consecutive elements of $I$.

For the last statement, given $r$ with $v=\mu(B(y,r))<i_1^{-1/s}$, the
integer $j=\lceil v^{-s}\rceil-1$ satisfies
$(j+1)^{-1/s}\le v<j^{-1/s}$ and, exactly as above, $j\in I$, i.e.\
$j=i_n$ for a (unique) $n\ge 1$.
\end{proof}

\section{The universal lower bound}\label{sec:lower}

The following proposition is Proposition~2.1 of Galatolo-Kim \cite{GK}. We
reproduce their proof, both for completeness and because
Remark~\ref{rem:noimprove} below hinges on its mechanism.

\begin{proposition}[\cite{GK}]\label{prop:lower}
Let $\{B_n\}$ be a decreasing sequence of measurable subsets of $X$ with
$\lim_n\mu(B_n)=0$. Then
\[
\liminf_{n\to\infty}\frac{\log\tau_{B_n}(x)}{-\log\mu(B_n)}\ \ge\ 1
\qquad\text{for a.e.\ } x.
\]
\end{proposition}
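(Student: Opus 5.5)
Fix $\varepsilon>0$; it suffices to show that for a.e.\ $x$ one has $\tau_{B_n}(x)\ge\mu(B_n)^{-(1-\varepsilon)}$ for all large $n$, and then let $\varepsilon\downarrow0$ along a countable sequence. The mechanism is the elementary union bound: for any measurable $A$ and $t\in\N$, measure preservation gives
\[
\mu\{x:\tau_A(x)\le t\}=\mu\Bigl(\bigcup_{k=1}^{t}T^{-k}A\Bigr)\le t\,\mu(A).
\]
Applied with $A=B_n$ and $t=\lfloor\mu(B_n)^{-(1-\varepsilon)}\rfloor$, this makes $\mu\{\tau_{B_n}\le t\}\le\mu(B_n)^{\varepsilon}$. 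If $\sum_n\mu(B_n)^\varepsilon$ were finite, the first Borel--Cantelli lemma would finish the argument, but in general it is not (e.g.\ $\mu(B_n)$ may decrease very slowly, or stay constant for long stretches).

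The main step, and the main obstacle, is therefore to pass to a sparse subsequence and then interpolate using monotonicity. Since $\mu(B_n)\downarrow0$, I will choose indices $n_1<n_2<\cdots$ with $n_k$ the first index such that $\mu(B_{n_k})\le 2^{-k}$. Since the $B_n$ are decreasing, every $n$ with $n_k\le n<n_{k+1}$ satisfies $2^{-(k+1)}<\mu(B_n)\le\mu(B_{n_k})\le2^{-k}$ (or the block is empty). Rather than using $B_{n_k}$ itself, I will control the block by its smallest member's superset, i.e.\ by $B_{n_k}$: for $n$ in the block, $B_n\subset B_{n_k}$ gives $\tau_{B_n}\ge\tau_{B_{n_k}}$. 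So I set $E_k=\{\tau_{B_{n_k}}\le 2^{(k+1)(1-\varepsilon)}\}$. By the union bound, $\mu(E_k)\le 2^{(k+1)(1-\varepsilon)}\mu(B_{n_k})\le 2^{(k+1)(1-\varepsilon)}2^{-k}$, which is summable in $k$ up to the constant factor $2^{1-\varepsilon}$. Hence a.e.\ $x$ lies in only finitely many $E_k$.

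For such $x$ and large $k$, every $n$ in the $k$-th block satisfies
\[
\tau_{B_n}(x)\ge\tau_{B_{n_k}}(x)>2^{(k+1)(1-\varepsilon)}>\mu(B_n)^{-(1-\varepsilon)},
\]
using $\mu(B_n)>2^{-(k+1)}$. Taking logarithms gives $\log\tau_{B_n}(x)/(-\log\mu(B_n))\ge1-\varepsilon$ for all large $n$. Some bookkeeping is needed: the ratio is only meaningful once $\mu(B_n)<1$, which holds eventually; $\tau=\infty$ trivially satisfies the bound; and if some $\mu(B_n)=0$, then $\tau_{B_n}=\infty$ a.e.\ by the same union bound, so the claim is trivial for that $n$. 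Intersecting the resulting full-measure sets over $\varepsilon=1/j$ yields the proposition.
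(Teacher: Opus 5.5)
Your proof is correct and follows essentially the paper's route. You pass to the dyadic subsequence of first indices with $\mu(B_{n_k})\le 2^{-k}$, use $\tau_{B_n}\ge\tau_{B_{n_k}}$ on each block, and conclude with the union bound $\mu\{\tau_A\le t\}\le t\,\mu(A)$ and the first Borel--Cantelli lemma. The only difference is bookkeeping: you absorb the within-block loss into the threshold $2^{(k+1)(1-\varepsilon)}$, at the harmless cost of a factor $2^{1-\varepsilon}$ in the summable bound. The paper instead first shows that the $\liminf$ along all $n$ equals the one along $(n_i)$ up to the factor $i/(i+1)$. Your explicit handling of the case $\mu(B_n)=0$ is a point the paper leaves implicit.
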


\begin{proof}
We first reduce to the case $\mu(B_n)\le 2^{-n}$. Let
$n_i=\min\{n\ge 1:\mu(B_n)<2^{-i}\}$. If $n_i\le n<n_{i+1}$ then
$B_n\subset B_{n_i}$, hence $\tau_{B_n}(x)\ge\tau_{B_{n_i}}(x)$, and
$2^{-i}>\mu(B_{n_i})\ge\mu(B_n)\ge 2^{-i-1}$, so that
\[
\frac{\log\tau_{B_n}(x)}{-\log\mu(B_n)}
\ \ge\
\frac{\log\tau_{B_{n_i}}(x)}{-\log\mu(B_{n_i})}\cdot
\frac{-\log\mu(B_{n_i})}{-\log\mu(B_n)}
\ >\
\frac{\log\tau_{B_{n_i}}(x)}{-\log\mu(B_{n_i})}\cdot\frac{i}{i+1}.
\]
Therefore the lower limit along all $n$ coincides with the lower limit
along the subsequence $(n_i)$, and we may assume $\mu(B_n)\le 2^{-n}$.

Fix $\delta>0$ and let
$E_n=\{x:\tau_{B_n}(x)<\mu(B_n)^{-(1-\delta)}\}$. Since
$E_n\subset\bigcup_{1\le i<\mu(B_n)^{-(1-\delta)}}T^{-i}B_n$ and $T$
preserves $\mu$,
\[
\mu(E_n)\ \le\ \mu(B_n)^{-(1-\delta)}\mu(B_n)\ =\ \mu(B_n)^{\delta}
\ \le\ 2^{-n\delta},
\]
so $\sum_n\mu(E_n)<\infty$ and, by the classical Borel-Cantelli lemma,
$\mu(\limsup_n E_n)=0$. Hence for a.e.\ $x$ one eventually has
$\log\tau_{B_n}(x)/(-\log\mu(B_n))\ge 1-\delta$. Taking a sequence
$\delta\downarrow 0$ concludes the proof.
\end{proof}

\begin{remark}\label{rem:speed}
As observed in \cite[Remark~2.2]{GK}, taking $\delta=\delta_n\downarrow 0$
with $\delta_n\le(1+\varepsilon)\log n/(-\log\mu(B_n))$ in the proof yields a
speed of convergence: if $\mu(B_n)\le 2^{-n}$ then for each $\varepsilon>0$ and
a.e.\ $x$, eventually
$\log\tau_{B_n}(x)/(-\log\mu(B_n))\ge 1-(1+\varepsilon)\log n/(-\log\mu(B_n))$.
This estimate concerns the universal exponent $1$ and is unaffected by the
exponent-$s$ generalization studied here.
\end{remark}

\begin{remark}\label{rem:noimprove}
The exponent $1$ in Proposition~\ref{prop:lower} cannot be replaced by any
$s>1$, even under strong additional assumptions: on the one hand, the union
bound in the proof gives, for the set
$\{x:\tau_{B_n}(x)<\mu(B_n)^{-(s-\delta)}\}$, only the trivial estimate by
$\mu(B_n)^{1-s+\delta}\ge 1$. On the other hand, every irrational circle
rotation satisfies $\underline{R}(x,y)=1$ for a.e.\ $x$ and $y$
(see Section~\ref{sec:rotations}), so the value $1$ is attained. Lower
bounds of the form $\underline{R}\ge s>1$ are therefore genuinely
arithmetic properties of specific systems and never consequences of measure
preservation alone.
\end{remark}

For the applications we need Proposition~\ref{prop:lower} with the
continuous radius parameter, which requires a small argument because the
function $r\mapsto\mu(B(y,r))$ may have jumps.

\begin{corollary}\label{cor:lowercont}
Let $y\in\supp\mu$ with $\mu(\{y\})=0$. Then
\[
\underline{R}(x,y)=\liminf_{r\to 0}
\frac{\log\tau_{B(y,r)}(x)}{-\log\mu(B(y,r))}\ \ge\ 1
\qquad\text{for a.e.\ }x.
\]
\end{corollary}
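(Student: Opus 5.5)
We reduce the continuous-parameter statement to Proposition~\ref{prop:lower} by discretizing the radii. This has to be done so that every small radius $r$ is dominated by some ball in a fixed decreasing sequence, with comparable measure. The difficulty is that $r\mapsto\mu(B(y,r))$ may jump, so we cannot simply pick radii with $\mu(B(y,r_k))=2^{-k}$. Instead we index the sequence by the values actually attained.

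\emph{Construction of the sequence.} For $k\in\N$ let $\rho_k=\sup\{r>0:\mu(B(y,r))<2^{-k}\}$. This supremum is positive and finite, since $\mu(B(y,r))\downarrow 0$ and $y\in\supp\mu$. Put $B_k=\{x:d(x,y)<\rho_k\}=\bigcup_{r<\rho_k}B(y,r)$. By continuity from below, $\mu(B_k)\le 2^{-k}$. Also $\rho_k$ is nonincreasing in $k$, so the $B_k$ are decreasing with $\mu(B_k)\to0$, and Proposition~\ref{prop:lower} applies: for a.e.\ $x$,
\[
\liminf_k \frac{\log\tau_{B_k}(x)}{-\log\mu(B_k)}\ge 1 .
\]
One should check that $\mu(B_k)>0$, which holds because $B_k$ contains small closed balls about $y$. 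If $\rho_k$ stays constant for several $k$, the sequence simply repeats, which is harmless. If $\rho_k\to\rho>0$, then $\mu(B(y,r))<2^{-k}$ for all $k$ and all $r<\rho$, contradicting $y\in\supp\mu$. So $\rho_k\to0$.

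\emph{Comparison step.} Fix a small $r$ and let $k$ be the largest index with $r<\rho_k$. This $k$ exists because $\rho_k\to 0$, and $k\to\infty$ as $r\to0$. Then $B(y,r)\subset B_k$, so $\tau_{B(y,r)}\ge\tau_{B_k}$. Monotonicity alone gives $\mu(B(y,r))\le\mu(B_k)$, which bounds the denominator in the wrong direction. We need a lower bound on $\mu(B(y,r))$ in terms of $2^{-k}$.

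Since $r\ge\rho_{k+1}$, either $r>\rho_{k+1}$ or $r=\rho_{k+1}$. In both cases $\mu(B(y,r))\ge 2^{-(k+1)}$: when $r>\rho_{k+1}$ this is the definition of the supremum, and when $r=\rho_{k+1}$ it follows because the closed ball at $\rho_{k+1}$ is the intersection of the balls at slightly larger radii, all of which have measure $\ge 2^{-(k+1)}$. Hence $-\log\mu(B(y,r))\le (k+1)\log2$.

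Two cases remain for the denominator attached to $B_k$. If $-\log\mu(B_k)\le (k+1)\log 2$ as well, then the denominators are comparable up to the factor $(k+1)/k$. If instead $\mu(B_k)$ is much smaller than $2^{-k}$, the ratio for $B_k$ is only helped when compared in the other direction, so the naive route is problematic. The remedy is to avoid comparing the denominators of $B(y,r)$ and $B_k$ at all, and to rerun the proof of Proposition~\ref{prop:lower} directly with a cruder denominator.

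\emph{Direct argument.} Fix $\delta>0$ and let
\[
E_k=\{x:\tau_{B_k}(x)<2^{(k+1)(1-\delta)}\}.
\]
The union bound gives $\mu(E_k)\le 2^{(k+1)(1-\delta)}2^{-k}$. This is not summable, so we shift the exponent: use $E_k=\{\tau_{B_k}<2^{k(1-\delta)}\}$, for which $\mu(E_k)\le 2^{-k\delta}$ is summable. For a.e.\ $x$ and all large $k$ we then get $\tau_{B(y,r)}(x)\ge\tau_{B_k}(x)\ge 2^{k(1-\delta)}$. Combining with $-\log\mu(B(y,r))\le(k+1)\log2$ gives
\[
\frac{\log\tau_{B(y,r)}(x)}{-\log\mu(B(y,r))}\ge (1-\delta)\,\frac{k}{k+1},
\]
and letting $r\to0$ and $\delta\downarrow0$ along a sequence finishes the proof. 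This route bypasses Proposition~\ref{prop:lower} and reuses only its mechanism.

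The main obstacle is the jump issue: showing $\mu(B(y,r))\ge2^{-(k+1)}$ at the boundary radius $r=\rho_{k+1}$. This requires closed balls together with continuity from above.
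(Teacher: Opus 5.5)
Your proof is correct, but it discretizes differently from the paper and bypasses Proposition~\ref{prop:lower}.

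The paper applies the window lemma (Lemma~\ref{lem:windows}) with $s=1$. It takes $B_k$ to be the union of all balls whose measure lies in the window $[(j_k+1)^{-1},j_k^{-1})$, so $B_k$ and every $B(y,r)$ with $k(r)=k$ have measures in the same window. It then invokes Proposition~\ref{prop:lower} and compares the two ratios.

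You discretize at dyadic levels through the critical radii $\rho_k=\sup\{r:\mu(B(y,r))<2^{-k}\}$ and allow $B_k$ to repeat when $r\mapsto\mu(B(y,r))$ jumps across several levels. You get $\mu(B(y,r))\ge 2^{-(k+1)}$ from $r\ge\rho_{k+1}$, using closedness at the endpoint. You then redo the union bound directly with threshold $2^{k(1-\delta)}$, so summability is immediate. This needs neither the window lemma nor the reduction step inside Proposition~\ref{prop:lower}.

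What each route buys: the paper's construction works for open or closed balls and is the same machinery reused in Theorems~\ref{thm:main-liminf} and~\ref{thm:main-limsup}. Yours is shorter and self-contained, but relies on the closed-ball convention at $r=\rho_{k+1}$. For open balls you would instead take $k$ largest with $r\le\rho_k$.

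Your detour away from the Proposition was not needed. For every $k=k(r)$ you already have
\[
2^{-(k+1)}\le\mu(B(y,r))\le\mu(B_k)\le 2^{-k}.
\]
So the case ``$\mu(B_k)$ much smaller than $2^{-k}$'' never occurs for the indices you use. Even if it did, it would only help: the Proposition eventually gives
\[
\log\tau_{B_k}(x)\ge(1-\delta)\bigl(-\log\mu(B_k)\bigr)\ge(1-\delta)k\log 2.
\]
The naive comparison therefore goes through exactly as in the paper.

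A minor point: $\rho_k<\infty$ follows from $\mu(B(y,r))\to 1$ as $r\to\infty$, not from $y\in\supp\mu$.
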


\begin{proof}
Apply Lemma~\ref{lem:windows} with $s=1$ and let $I=\{j_1<j_2<\cdots\}$ be
the corresponding set of window indices, so that every sufficiently small
$r>0$ (say with $\mu(B(y,r))<j_1^{-1}$) has
$\mu(B(y,r))\in[(j_k+1)^{-1},j_k^{-1})$ for a unique $k=k(r)$, and
$k(r)\to\infty$ as $r\to 0$.

For each $k$ let
$W_k=\{r>0:(j_k+1)^{-1}\le\mu(B(y,r))<j_k^{-1}\}$ (nonempty since
$j_k\in I$) and let $B_k=\bigcup_{r\in W_k}B(y,r)$. Then $B_k$ is a ball
centered at $y$ (of radius $\sup W_k$, possibly open), with
\[
(j_k+1)^{-1}\ \le\ \mu(B_k)\ =\ \sup_{r\in W_k}\mu(B(y,r))\ \le\ j_k^{-1}.
\]
If $r\in W_{k+1}$ and $r'\in W_k$ then
$\mu(B(y,r))<j_{k+1}^{-1}\le(j_k+1)^{-1}\le\mu(B(y,r'))$, whence $r<r'$, It
follows that $B_{k+1}\subset B_k$, so $(B_k)$ is a decreasing sequence with
$\mu(B_k)\to 0$. By Proposition~\ref{prop:lower},
\begin{equation}\label{eq:limBk}
\liminf_{k\to\infty}\frac{\log\tau_{B_k}(x)}{-\log\mu(B_k)}\ \ge\ 1
\qquad\text{for a.e.\ } x.
\end{equation}

Fix $x$ in the full measure set where \eqref{eq:limBk} holds and let $r$ be
small, $k=k(r)$. Then $B(y,r)\subset B_k$, hence
$\tau_{B(y,r)}(x)\ge\tau_{B_k}(x)$, while
$-\log\mu(B(y,r))\le\log(j_k+1)$ and $-\log\mu(B_k)\ge\log j_k$. Therefore
\[
\frac{\log\tau_{B(y,r)}(x)}{-\log\mu(B(y,r))}
\ \ge\
\frac{\log\tau_{B_k}(x)}{\log(j_k+1)}
\ \ge\
\frac{\log\tau_{B_k}(x)}{-\log\mu(B_k)}\cdot
\frac{\log j_k}{\log(j_k+1)}.
\]
As $r\to 0$ we have $k(r)\to\infty$ and $\log j_k/\log(j_k+1)\to 1$, so the
lower limit of the left-hand side is at least $1$ by \eqref{eq:limBk}.
\end{proof}

\section{From the $s$-exponent Borel-Cantelli property to waiting
times}\label{sec:direct}

\subsection{The lower exponent}

\begin{theorem}\label{thm:main-liminf}
Let $s>0$, let $y\in\supp\mu$ with $\mu(\{y\})=0$, and assume that every
nonincreasing sequence $\{B(y,\rho_n)\}$ of balls centered at $y$ with
$\sum_n\mu(B(y,\rho_n))^{s}=\infty$ is BC for $T$. Then
\[
\underline{R}(x,y)\ \le\ s\qquad\text{for a.e.\ } x.
\]
In particular, if $\mu$ is non-atomic and $(X,T,\mu)$ has $s$MSTP with
$s\ge 1$, then for every $y\in\supp\mu$,
\[
1\ \le\ \underline{R}(x,y)\ \le\ s\qquad\text{for a.e.\ } x .
\]
\end{theorem}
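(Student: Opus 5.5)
We follow the strategy of \cite[Theorem~2.4(i)]{GK}, using the window lemma at scale $i^{-1/s}$. Apply Lemma~\ref{lem:windows} with the given $s$ and take the nonincreasing radii $r'_i\downarrow 0$ from part (b). Set $\rho_i:=r'_i$. By (b),
\[
\mu(B(y,\rho_i))\ \ge\ (m(i)+1)^{-1/s}.
\]
By (c), $m(i)=i_n$ for $i_{n-1}<i\le i_n$. Hence
\[
\sum_i\mu(B(y,\rho_i))^s\ \ge\ \sum_n\frac{i_n-i_{n-1}}{i_n+1}.
\]
This sum diverges. Indeed, if $\sum_n(1-i_{n-1}/i_n)<\infty$, then $\prod_n i_{n-1}/i_n$ would converge to a positive limit, contradicting $i_n\to\infty$. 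So the sequence $\{B(y,\rho_i)\}$ is nonincreasing and satisfies the divergence hypothesis, and by assumption it is BC. Thus for a.e.\ $x$ there are infinitely many $i$ with $T^ix\in B(y,\rho_i)$.

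Fix such an $x$ and such an $i$. Then
\[
\tau_{B(y,\rho_i)}(x)\ \le\ i\ \le\ m(i),
\]
while $\mu(B(y,\rho_i))<m(i)^{-1/s}$ gives $-\log\mu(B(y,\rho_i))>\tfrac1s\log m(i)$. Therefore
\[
\frac{\log\tau_{B(y,\rho_i)}(x)}{-\log\mu(B(y,\rho_i))}\ <\ \frac{\log m(i)}{\tfrac1s\log m(i)}\ =\ s.
\]
Since the hitting indices $i\to\infty$ and $\rho_i\downarrow0$, these radii tend to $0$, and we obtain $\underline{R}(x,y)\le s$. The subtle point is that the radii along which we hit must genuinely tend to zero and not be eventually constant. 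This is guaranteed by $r'_i\downarrow 0$ in Lemma~\ref{lem:windows}(b). We should also allow $\tau=0$ terms to be ignored: since $\tau_A\ge1$ by definition with $\N$ starting at $1$, the logarithms are nonnegative.

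For the "in particular" part, $s$MSTP at $y$ gives exactly the hypothesis, so the upper bound follows. The lower bound $\underline{R}\ge1$ a.e.\ is Corollary~\ref{cor:lowercont}, which applies since $\mu$ is non-atomic, so $\mu(\{y\})=0$, and $y\in\supp\mu$. Intersecting the two full-measure sets gives the claim.

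The main obstacle is the divergence of $\sum_i\mu(B(y,\rho_i))^s$. The function $r\mapsto\mu(B(y,r))$ may jump, so we cannot simply pick radii with measure about $i^{-1/s}$. The window structure of Lemma~\ref{lem:windows}(c), together with the elementary divergence of $\sum(i_n-i_{n-1})/i_n$, is what resolves this. A secondary care point is the comparison $\tau\le i\le m(i)$, where $m(i)$ rather than $i$ controls the measure.
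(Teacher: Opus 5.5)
Your proposal is correct and is essentially the paper's proof. You use the same radii $r'_i$ from Lemma~\ref{lem:windows}(b), the BC hypothesis to get $\tau_{B(y,r'_i)}(x)\le i\le m(i)$ infinitely often, and the bound $-\log\mu(B(y,r'_i))>\frac1s\log m(i)$ (take $i\ge 2$ so that $\log m(i)>0$). The only cosmetic difference is the divergence step: you use all of $I$ via Lemma~\ref{lem:windows}(c) and the telescoping product $\prod_{n=2}^{N} i_{n-1}/i_n=i_1/i_N\to 0$, whereas the paper passes to a subsequence $i'_k\ge 2i'_{k-1}$ of $I$ so that each block contributes at least about $\tfrac12$.
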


\begin{proof}
Apply Lemma~\ref{lem:windows} with the given $s$ and let $r'_i$, $m(i)$,
$I$ be as there. Recall that $(r'_i)$ is nonincreasing, $r'_i\downarrow 0$
and
\begin{equation}\label{eq:window-i}
(m(i)+1)^{-1/s}\ \le\ \mu(B(y,r'_i))\ <\ m(i)^{-1/s}
\qquad (i\in\N).
\end{equation}

\emph{Divergence.} Since $I$ is infinite we may choose
$i'_1<i'_2<\cdots$ in $I$ with $i'_k\ge 2\,i'_{k-1}$ for $k\ge 2$. If
$i'_{k-1}<i\le i'_k$ then, by monotonicity of $m$ and $m(i'_k)=i'_k$,
$m(i)\le i'_k$, so \eqref{eq:window-i} gives
$\mu(B(y,r'_i))^{s}\ge(m(i)+1)^{-1}\ge(i'_k+1)^{-1}$. Hence
\[
\sum_{i=1}^{\infty}\mu(B(y,r'_i))^{s}
\ \ge\
\sum_{k\ge 2}\frac{i'_k-i'_{k-1}}{i'_k+1}
\ \ge\
\sum_{k\ge 2}\frac{i'_k/2}{i'_k+1}
\ =\ \infty .
\]

\emph{Conclusion.} By hypothesis $\{B(y,r'_i)\}$ is BC, so for a.e.\ $x$
there are infinitely many $i$ with $T^i x\in B(y,r'_i)$, i.e.\
$\tau_{B(y,r'_i)}(x)\le i$. For any such $i\ge 2$, using
\eqref{eq:window-i} and $m(i)\ge i$,
\[
-\log\mu(B(y,r'_i))\ >\ \tfrac{1}{s}\log m(i)\ \ge\ \tfrac{1}{s}\log i ,
\]
whence
\[
\frac{\log\tau_{B(y,r'_i)}(x)}{-\log\mu(B(y,r'_i))}
\ \le\
\frac{\log i}{\tfrac1s\log i}\ =\ s .
\]
Since $r'_i\downarrow 0$, this exhibits a sequence of radii tending to $0$
along which the ratio is at most $s$. Hence
$\underline{R}(x,y)\le s$ for a.e.\ $x$. The final statement follows by
combining this with Corollary~\ref{cor:lowercont}.
\end{proof}

\begin{corollary}\label{cor:noless1}
Suppose $\mu$ is non-atomic. Then for no $s\in(0,1)$ can $(X,T,\mu)$ have
the property that every nonincreasing sequence of centered balls
$\{B(y,\rho_n)\}$ with $\sum_n\mu(B(y,\rho_n))^s=\infty$ is BC.
\end{corollary}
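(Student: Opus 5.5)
The plan is to argue by contradiction, combining Theorem~\ref{thm:main-liminf} with Corollary~\ref{cor:lowercont}. Suppose $\mu$ is non-atomic and, for some $s\in(0,1)$, every nonincreasing sequence of centered balls $\{B(y,\rho_n)\}$ with $\sum_n\mu(B(y,\rho_n))^s=\infty$ is BC.

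Fix any $y\in\supp\mu$; such a point exists because $\mu$ is a probability measure. Since $\mu$ is non-atomic, $\mu(\{y\})=0$, so the hypotheses of Theorem~\ref{thm:main-liminf} hold at $y$. That theorem is stated for arbitrary $s>0$, and its proof (via Lemma~\ref{lem:windows}, also valid for all $s>0$) uses nothing beyond the $s$-divergence BC assumption. Hence $\underline{R}(x,y)\le s<1$ for a.e.\ $x$.

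On the other hand, Corollary~\ref{cor:lowercont} gives $\underline{R}(x,y)\ge 1$ for a.e.\ $x$. The two full-measure sets intersect, so some $x$ satisfies $1\le\underline{R}(x,y)\le s<1$, a contradiction.

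The only point needing care is whether $\underline{R}(x,y)$ is well defined at the chosen $y$. Its definition requires $\mu(B(y,r))\in(0,1)$ for small $r$, so that the denominator $-\log\mu(B(y,r))$ is positive. This holds because $y\in\supp\mu$ gives $\mu(B(y,r))>0$, and $\mu(\{y\})=0$ gives $\mu(B(y,r))\to0$. I expect no real obstacle here: the corollary is a direct juxtaposition of the two bounds.
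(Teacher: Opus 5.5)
Your proposal is correct and is essentially the paper's proof: fix $y\in\supp\mu$, note that non-atomicity gives $\mu(\{y\})=0$, apply Theorem~\ref{thm:main-liminf} (whose proof uses only $s>0$) to get $\underline{R}(x,y)\le s<1$ a.e., and contradict Corollary~\ref{cor:lowercont}. Your added check that $\underline{R}(x,y)$ is well defined at $y$ is a harmless extra detail.
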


\begin{proof}
Pick any $y\in\supp\mu$ (nonempty since $\mu$ is a probability measure).
If the property held with $s<1$, Theorem~\ref{thm:main-liminf} (whose proof
uses only $s>0$) would give $\underline{R}(x,y)\le s<1$ for a.e.\ $x$,
contradicting Corollary~\ref{cor:lowercont}.
\end{proof}

Corollary~\ref{cor:noless1} explains the restriction $s\ge 1$ in
Definition~\ref{def:smstp}: it is not a convention but a theorem.

\begin{corollary}\label{cor:dimension}
In the setting of Theorem~\ref{thm:main-liminf}, if in addition the local
dimension $d_\mu(y)$ exists and lies in $(0,\infty)$, then for a.e.\ $x$
\[
d_\mu(y)\ \le\
\liminf_{r\to 0}\frac{\log\tau_{B(y,r)}(x)}{-\log r}
\ \le\ s\, d_\mu(y).
\]
\end{corollary}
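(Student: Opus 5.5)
The plan is to convert the bounds of Theorem~\ref{thm:main-liminf} (together with Corollary~\ref{cor:lowercont}) from the scale $-\log\mu(B(y,r))$ to the scale $-\log r$. Write the ratio as a product:
\[
\frac{\log\tau_{B(y,r)}(x)}{-\log r}
=\frac{\log\tau_{B(y,r)}(x)}{-\log\mu(B(y,r))}\cdot\frac{\log\mu(B(y,r))}{\log r}.
\]
This is valid for small $r$, since then $\mu(B(y,r))\in(0,1)$ and $r<1$. Because $d_\mu(y)$ exists, the second factor converges to $d_\mu(y)\in(0,\infty)$ as $r\to 0$.

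The key elementary fact is this. If $a(r)\ge 0$ and $b(r)\to d\in(0,\infty)$, then $\liminf_{r\to 0} a(r)b(r)=d\cdot\liminf_{r\to 0} a(r)$. To see it, fix $\varepsilon>0$. For $r$ small we have $(d-\varepsilon)a(r)\le a(r)b(r)\le(d+\varepsilon)a(r)$, which uses $a\ge 0$. This holds because $\tau\ge 1$, so $\log\tau\ge 0$; if $\tau=\infty$ the ratio is $+\infty$, which is harmless. Taking lower limits and letting $\varepsilon\to 0$ gives the identity, including the case where the $\liminf$ is infinite.

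Applying this with $a(r)$ the waiting-time ratio, we get for every $x$
\[
\liminf_{r\to 0}\frac{\log\tau_{B(y,r)}(x)}{-\log r}=d_\mu(y)\,\underline{R}(x,y).
\]
On the full-measure set where $1\le\underline{R}(x,y)\le s$, multiplying by $d_\mu(y)$ gives the claim. The upper bound comes from Theorem~\ref{thm:main-liminf}. The lower bound comes from Corollary~\ref{cor:lowercont}, which needs only $y\in\supp\mu$ and $\mu(\{y\})=0$. Both are hypotheses of the setting. Alternatively, $\mu(\{y\})=0$ follows from $d_\mu(y)>0$.

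The only point needing care is that the lower bound $1$ must come from the universal Corollary~\ref{cor:lowercont} rather than from $s$MSTP. It also needs the positivity of $d_\mu(y)$ so that the factor is nondegenerate, and the finiteness of $d_\mu(y)$ so that the upper bound $s\,d_\mu(y)$ is meaningful. There is no real obstacle.
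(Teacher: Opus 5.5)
Your proposal is correct and follows the paper's proof essentially verbatim: you factor the ratio through $-\log\mu(B(y,r))$ and use the elementary identity $\liminf_{r\to 0} a(r)b(r)=d\,\liminf_{r\to 0} a(r)$ for $a\ge 0$, $b\to d\in(0,\infty)$. You then take the upper bound $s$ from Theorem~\ref{thm:main-liminf} and the lower bound $1$ from Corollary~\ref{cor:lowercont}, and your extra care about $\tau=\infty$ and the validity of the factorization for small $r$ is harmless and accurate.
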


\begin{proof}
Write
\[
\frac{\log\tau_{B(y,r)}(x)}{-\log r}
=\frac{\log\tau_{B(y,r)}(x)}{-\log\mu(B(y,r))}\cdot
\frac{-\log\mu(B(y,r))}{-\log r},
\]
where the second factor tends to $d_\mu(y)\in(0,\infty)$ as $r\to 0$. For a
nonnegative function $a(r)$ and $b(r)\to d\in(0,\infty)$ one has
$\liminf_{r\to 0}a(r)b(r)=d\,\liminf_{r\to 0}a(r)$. Apply this with
Theorem~\ref{thm:main-liminf} and Corollary~\ref{cor:lowercont}.
\end{proof}

\subsection{The upper exponent under the $s$-strong property}

\begin{theorem}\label{thm:main-limsup}
Let $s\ge 1$ and let $y\in\supp\mu$ with $\mu(\{y\})=0$. Assume that $T$
has the $s$-strong monotone shrinking target property at $y$
(Definition~\ref{def:ssbc}), the balls being closed. Then
\[
\overline{R}(x,y)\ \le\ s \qquad\text{for a.e.\ } x .
\]
Consequently, if this holds at every $y\in\supp\mu$ and $\mu$ is
non-atomic, then for every such $y$ and a.e.\ $x$,
\[
1\ \le\ \underline{R}(x,y)\ \le\ \overline{R}(x,y)\ \le\ s .
\]
\end{theorem}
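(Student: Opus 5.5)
Following the proof of \cite[Theorem~2.4(ii)]{GK}, I would apply the $s$-SBC hypothesis to a single sequence of balls built from the windows of Lemma~\ref{lem:windows}. Each ball will be repeated for a whole block of consecutive times. The asymptotic count $S_N(x)\sim\sum_{n\le N}\mu(A_n)^s$ then forces a hit inside every sufficiently late block. That gives a waiting time bound valid for every small radius, not just along a subsequence as in Theorem~\ref{thm:main-liminf}.

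\emph{Construction.} Apply Lemma~\ref{lem:windows} with the given $s$, and let $I=\{i_1<i_2<\cdots\}$ be the window indices. For each $n$ take the ball $D_n=\bigcup\{B(y,r):(i_n+1)^{-1/s}\le\mu(B(y,r))<i_n^{-1/s}\}$. As in the proof of Corollary~\ref{cor:lowercont}, these balls are nested and satisfy $(i_n+1)^{-1/s}\le\mu(D_n)\le i_n^{-1/s}$. Because the statement insists on closed balls, I would instead use a closed ball $B(y,\rho_n)$ with $\rho_n$ chosen inside the window; for this step the closedness convention is what makes the choice possible. Fix $\varepsilon>0$. Define $A_k=B(y,\rho_n)$ for $k$ in the block $L_{n-1}<k\le L_n$, where $L_n=\lfloor i_n^{1+\varepsilon}\rfloor$ (taken strictly increasing after discarding early terms). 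Then $\{A_k\}$ is a nonincreasing sequence of closed centered balls. The sum of $\mu(A_k)^s$ over block $n$ is $\asymp (L_n-L_{n-1})/i_n$, and summing these gives divergence.

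\emph{Every late block contains a hit.} Suppose for infinitely many $n$ there is no hit in block $n$. Then $S_{L_n}=S_{L_{n-1}}$, while $\sum_{k\le L_n}\mu(A_k)^s$ exceeds $\sum_{k\le L_{n-1}}\mu(A_k)^s$ by a factor bounded away from $1$. This contradicts the $s$-SBC ratio tending to $1$. To get that factor, I need the block mass $(L_n-L_{n-1})/i_n$ to be comparable to the cumulative sum $\sum_{k\le L_{n-1}}\mu(A_k)^s$. This is the main obstacle, since the gaps between consecutive elements of $I$ are uncontrolled. I would first try thinning $I$ to a subsequence $i'_n$ with $i'_n\ge 2i'_{n-1}$, as in Theorem~\ref{thm:main-liminf}. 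On block $n$ I would use the ball attached to the \emph{largest} window index not exceeding $i'_n$, and set $L_n\approx (i'_n)^{1+\varepsilon}$. The cumulative sum up to $L_{n-1}$ is then $O\bigl(\sum_{j<n}(i'_j)^{\varepsilon}\bigr)=O\bigl((i'_{n-1})^{\varepsilon}\bigr)$. The block alone contributes $\gtrsim (i'_n)^{\varepsilon}$ once the thinning is strong enough, so the ratio fails to tend to $1$. This argument only needs the fact that every window index gives admissible measures $\ge (i+1)^{-1/s}$.

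\emph{Conclusion.} Fix $x$ in the full measure set where every late block has a hit, and let $r$ be small. By Lemma~\ref{lem:windows}(c), $\mu(B(y,r))\in[(i_m+1)^{-1/s},i_m^{-1/s})$ for a unique window $m$. Choose the block $n$ whose ball is contained in $B(y,r)$ with index as small as possible. A hit at a time $\le L_n$ gives $\tau_{B(y,r)}(x)\le L_n\lesssim i^{1+\varepsilon}$, with $i$ comparable to $i_m$ up to the thinning factor. I would control that factor by using \emph{all} window indices as blocks, each with the multiplicative spacing built into $L_n$, rather than only the thinned ones. Then $\log\tau/(-\log\mu(B(y,r)))\le (1+\varepsilon)\log i_m/(\tfrac1s\log i_m)+o(1)$. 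Letting $\varepsilon\downarrow0$ gives $\overline{R}\le s$, and combining with Theorem~\ref{thm:main-liminf} and Corollary~\ref{cor:lowercont} gives the final chain of inequalities.
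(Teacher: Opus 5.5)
Your proposal has a genuine gap. The step ``every late block contains a hit'' and the final step ``every small $r$ has a block ending by time $i_m^{1+\varepsilon+o(1)}$ whose ball lies in $B(y,r)$'' are not both achieved by any of the constructions you describe. Write $\Phi(N)=\sum_{k\le N}\mu(A_k)^s$.

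\emph{All window indices as blocks.} Here the first step cannot be derived. Whenever $r\mapsto\mu(B(y,r))$ is continuous (e.g.\ Lebesgue measure on $\T$), $I$ contains all large integers. Block $n$ then carries $\Phi$-mass about $(1+\varepsilon)i_n^{\varepsilon-1}$, while $\Phi(L_{n-1})\approx\frac{1+\varepsilon}{\varepsilon}\,i_n^{\varepsilon}$. So $\Phi(L_n)/\Phi(L_{n-1})\to 1$, and a hitless block, even infinitely often, gives no contradiction with $S_N/\Phi(N)\to 1$.

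\emph{Thinning $i'_n\ge 2i'_{n-1}$ as in Theorem~\ref{thm:main-liminf}.} With $i'_n\in I$ the block step now works, but the final step fails. Such a thinning gives no upper bound on $i'_n$ in terms of the windows it skips. Suppose the window indices beyond $M$ are $M,M+1,\dots,3M$ and then $2^M$. The admissible thinning $M,2M,2^M,\dots$ leaves every $r$ in window $3M$ with only the ball of index $2^M$ inside $B(y,r)$. You then get $\tau_{B(y,r)}(x)\le 2^{(1+\varepsilon)M}$ against $-\log\mu(B(y,r))\approx\frac1s\log(3M)$. If instead $i'_n\notin I$ and you use the ball of the largest window index below it, a large ball is used up to time $(i'_n)^{1+\varepsilon}$, and your bound $\Phi(L_{n-1})=O((i'_{n-1})^{\varepsilon})$ breaks down. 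Your remedy, going back to all window indices, just reinstates the first failure.

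Also, $\rho_n$ cannot be an arbitrary radius in the window. You need $B(y,\rho_n)\subset B(y,r)$ for every $r$ in that window, so $\rho_n$ must be the infimum of those radii. Closedness is exactly what keeps $\mu(B(y,\rho_n))\ge(i_n+1)^{-1/s}$ for that infimum.

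The missing ingredient is control of $\Phi$ that is uniform over both dense runs and gaps of $I$. Within your route it can be supplied by a subsequence $I'\subset I$ that is both sparse ($i'_{n}\ge 2i'_{n-1}$) and covering (each $i\in I$ has some $i'\in I'$ with $i\le i'\le 4i$). Such an $I'$ exists: in each maximal run of $I$ along which consecutive ratios are $<2$, select the last element, then repeatedly the largest element at most half of the previously selected one; in an infinite final run, select forwards. With this $I'$ and $\rho_n$ taken as the infimum of window $i'_n$, your block argument goes through, using a test sequence that depends on $\varepsilon$.

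The paper avoids thinning altogether. It builds a single test sequence, independent of $\delta$, whose normalization grows logarithmically. Inside each gap $(i_n,i_{n+1}]$ it uses $B(y,r_n)$ up to time $i_nL_n$, with $L_n=\log(i_{n+1}/i_n)$, and $B(y,r_{n+1})$ afterwards. This gives $\Phi(i_n)<1+\log i_n$ and $\Phi(J)-\Phi(i_n)\ge(1-\frac1{i_n}-\frac1e)\log\frac{J}{i_n}-1$. It then argues by contradiction only on the event $\tau_{B(y,r_n)}(x)>i_n^{1+\delta}$. On that event $S_N(x)$ is constant for $i_n\le N\le i_n^{1+\delta}$, because $A_k\subset B(y,r_n)$ for $k\ge i_n$, while $\Phi$ grows by a factor tending to $1+(1-e^{-1})\delta$.
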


For $s=1$ this is Theorem~2.4(ii) of \cite{GK}. The proof below follows the
scheme of \cite{GK} with the scale $1/i$ replaced by $i^{-1/s}$. We give
full details.

\begin{proof}
Let $I=\{i_1<i_2<\cdots\}$ be the window index set of
Lemma~\ref{lem:windows} for the exponent $s$, and set
\[
S_n=\bigl\{r>0:\ (i_n+1)^{-1/s}\le\mu(B(y,r))<i_n^{-1/s}\bigr\},
\qquad
r_n=\inf S_n\qquad(n\ge 1);
\]
each $S_n$ is nonempty because $i_n\in I$.

\emph{Step 1: properties of $(r_n)$.} First,
\begin{equation}\label{eq:rn-window}
(i_n+1)^{-1/s}\ \le\ \mu(B(y,r_n))\ <\ i_n^{-1/s}.
\end{equation}
Indeed, choose $\rho_k\in S_n$ with $\rho_k\downarrow r_n$. Then
$\mu(B(y,r_n))\le\mu(B(y,\rho_k))<i_n^{-1/s}$, while, the balls being
closed, $B(y,r_n)=\bigcap_k B(y,\rho_k)$ and therefore
$\mu(B(y,r_n))=\lim_k\mu(B(y,\rho_k))\ge(i_n+1)^{-1/s}$. Next, comparing
\eqref{eq:rn-window} for consecutive $n$ and using $i_{n+1}\ge i_n+1$,
\[
\mu(B(y,r_{n+1}))<i_{n+1}^{-1/s}\le(i_n+1)^{-1/s}\le\mu(B(y,r_n)),
\]
so $(r_n)$ is strictly decreasing. And $\mu(B(y,r_n))\to 0$ with
$y\in\supp\mu$ forces $r_n\downarrow 0$ as in
Lemma~\ref{lem:windows}(b). Finally we claim the sandwich property.
\begin{equation}\label{eq:sandwich}
r_n\le r<r_{n-1}\ (n\ge 2)\quad\Longrightarrow\quad
(i_n+1)^{-1/s}\le\mu(B(y,r))<i_n^{-1/s}.
\end{equation}
The lower bound is monotonicity together with \eqref{eq:rn-window}. For the
upper bound suppose $\mu(B(y,r))\ge i_n^{-1/s}$. By
Lemma~\ref{lem:windows}(c) no radius has its ball measure in
$[\,i_n^{-1/s},(i_{n-1}+1)^{-1/s})$, so
$\mu(B(y,r))\ge(i_{n-1}+1)^{-1/s}$. If moreover
$\mu(B(y,r))<i_{n-1}^{-1/s}$ then $r\in S_{n-1}$ and $r\ge r_{n-1}$,
a contradiction. If instead $\mu(B(y,r))\ge i_{n-1}^{-1/s}$, then for every
$r'\in S_{n-1}$ we have $\mu(B(y,r'))<i_{n-1}^{-1/s}\le\mu(B(y,r))$, hence
$r>r'$, and taking the infimum over $r'\in S_{n-1}$ gives $r\ge r_{n-1}$,
again a contradiction. This proves \eqref{eq:sandwich}.

\emph{Step 2: the test sequence.} Set $L_n=\log(i_{n+1}/i_n)>0$ and define
a nonincreasing sequence of closed balls centered at $y$ by
\[
A_k=
\begin{cases}
B(y,r_1), & 1\le k\le i_1,\\[2pt]
B(y,r_n), & i_n<k<i_{n+1} \text{ and } k\le i_nL_n,\\[2pt]
B(y,r_{n+1}), & i_n<k<i_{n+1} \text{ and } k> i_nL_n,\\[2pt]
B(y,r_{n+1}), & k=i_{n+1},
\end{cases}
\qquad n\ge 1 .
\]
(The middle cases are consistent: if $L_n<1$ then $i_nL_n<i_n$ and the
whole block $i_n<k\le i_{n+1}$ carries $B(y,r_{n+1})$. If $L_n\ge 1$ then
$i_n\le i_nL_n<i_{n+1}$, since $\log t<t$ for $t\ge 1$.) Write
$\nu_k=\mu(A_k)^s$ and $\Phi(N)=\sum_{k=1}^{N}\nu_k$. By
\eqref{eq:rn-window},
\begin{equation}\label{eq:nuk}
A_k=B(y,r_n)\ \Longrightarrow\ \frac{1}{i_n+1}\le\nu_k<\frac{1}{i_n}.
\end{equation}

\emph{Step 3: upper bound for $\Phi(i_n)$.} We claim
\begin{equation}\label{eq:phi-upper}
\Phi(i_n)\ <\ 1+\log i_n\qquad(n\ge 1).
\end{equation}
The initial segment contributes
$\sum_{k\le i_1}\nu_k<i_1\cdot i_1^{-1}=1$. For a full block, if $L_n<1$
then all its terms equal $\mu(B(y,r_{n+1}))^s<i_{n+1}^{-1}$ and
\begin{equation}\label{eq:E1}
\sum_{k=i_n+1}^{i_{n+1}}\nu_k\ <\ \frac{i_{n+1}-i_n}{i_{n+1}}
\ =\ 1-\frac{i_n}{i_{n+1}}\ <\ L_n ,
\end{equation}
using $1-t^{-1}<\log t$ for $t>1$. If $L_n\ge 1$, put $L=i_nL_n$ and
$D=\frac{1}{i_n}-\frac{1}{i_{n+1}}>0$. Then, by \eqref{eq:nuk},
\begin{equation}\label{eq:E2}
\sum_{k=i_n+1}^{i_{n+1}}\nu_k
\ <\ \frac{\lfloor L\rfloor-i_n}{i_n}+\frac{i_{n+1}-\lfloor L\rfloor}{i_{n+1}}
\ =\ \lfloor L\rfloor\, D
\ \le\ L\,D
\ =\ \Bigl(1-\frac{i_n}{i_{n+1}}\Bigr)L_n\ <\ L_n .
\end{equation}
Summing \eqref{eq:E1}-\eqref{eq:E2} over the blocks $\ell=1,\dots,n-1$
gives $\Phi(i_n)<1+\sum_{\ell<n}L_\ell=1+\log(i_n/i_1)\le 1+\log i_n$.

\emph{Step 4: lower bound for partial sums.} We claim that for all
$n\ge 1$ and every integer $J>i_n$, if $m\ge n$ is such that
$i_m<J\le i_{m+1}$, then
\begin{equation}\label{eq:chained}
\sum_{k=i_n+1}^{J}\nu_k\ >\
\Bigl(1-\frac{1}{i_n}-\frac{1}{e}\Bigr)\log\frac{J}{i_n}\ -\ 1 .
\end{equation}
We first bound a single (possibly partial) block: for $i_\ell<j\le
i_{\ell+1}$,
\begin{equation}\label{eq:block-lower}
\sum_{k=i_\ell+1}^{j}\nu_k\ >\
\Bigl(1-\frac{1}{i_\ell}-\frac1e\Bigr)\log\frac{j}{i_\ell}
\ -\ \frac{i_{\ell+1}-j}{i_{\ell+1}+1}.
\end{equation}
There are three cases.

\emph{Case (i): $L_\ell<1$.} All terms of the block equal
$\mu(B(y,r_{\ell+1}))^s\ge(i_{\ell+1}+1)^{-1}$, so
\[
\sum_{k=i_\ell+1}^{j}\nu_k\ \ge\ \frac{j-i_\ell}{i_{\ell+1}+1}
\ =\ \frac{1-i_\ell/i_{\ell+1}}{1+1/i_{\ell+1}}
\ -\ \frac{i_{\ell+1}-j}{i_{\ell+1}+1}.
\]
The elementary inequality $1-t^{-1}>(1-e^{-1})\log t$ for $1<t<e$ (the
function $g(t)=(1-t^{-1})-(1-e^{-1})\log t$ vanishes at $t=1$ and $t=e$
and $g'(t)=t^{-2}-(1-e^{-1})t^{-1}$ changes sign exactly once, from
positive to negative), applied to $t=i_{\ell+1}/i_\ell\in(1,e)$, gives
\[
\frac{1-i_\ell/i_{\ell+1}}{1+1/i_{\ell+1}}
\ >\ (1-e^{-1})\Bigl(1-\frac{1}{i_{\ell+1}}\Bigr)L_\ell
\ \ge\ \Bigl(1-\frac1e-\frac{1}{i_{\ell+1}}\Bigr)L_\ell
\ \ge\ \Bigl(1-\frac1e-\frac{1}{i_{\ell}}\Bigr)\log\frac{j}{i_\ell},
\]
using $L_\ell\ge\log(j/i_\ell)\ge 0$ and $i_{\ell+1}\ge i_\ell$.

\emph{Case (ii): $L_\ell\ge 1$ and $j>i_\ell L_\ell$.} With
$L=i_\ell L_\ell$, $D'=\frac{1}{i_\ell+1}-\frac{1}{i_{\ell+1}+1}>0$, and
using $\nu_k\ge(i_\ell+1)^{-1}$ on the first part of the block and
$\nu_k\ge(i_{\ell+1}+1)^{-1}$ on the second,
\[
\sum_{k=i_\ell+1}^{j}\nu_k
\ \ge\
\frac{\lfloor L\rfloor-i_\ell}{i_\ell+1}
+\frac{j-\lfloor L\rfloor}{i_{\ell+1}+1}
\ =\
\lfloor L\rfloor D'
-\frac{i_\ell}{i_\ell+1}+\frac{i_{\ell+1}}{i_{\ell+1}+1}
-\frac{i_{\ell+1}-j}{i_{\ell+1}+1}.
\]
Since $\lfloor L\rfloor\ge L-1$ and
$-D'-\frac{i_\ell}{i_\ell+1}+\frac{i_{\ell+1}}{i_{\ell+1}+1}=0$
(direct computation), the right-hand side is at least
\[
i_\ell D'\,L_\ell-\frac{i_{\ell+1}-j}{i_{\ell+1}+1}
=\Bigl(\frac{i_\ell}{i_\ell+1}-\frac{i_\ell}{i_{\ell+1}+1}\Bigr)L_\ell
-\frac{i_{\ell+1}-j}{i_{\ell+1}+1}.
\]
Now $i_\ell/i_{\ell+1}=e^{-L_\ell}\le e^{-1}$, so the coefficient of
$L_\ell$ is at least $1-\frac{1}{i_\ell+1}-\frac1e\ge
1-\frac{1}{i_\ell}-\frac1e\ (\ge 0$ for $i_\ell\ge 2$. For $i_\ell=1$ the
bound \eqref{eq:block-lower} is weaker than trivial and holds anyway
because its right-hand side is negative$)$, and
$L_\ell\ge\log(j/i_\ell)$, which yields \eqref{eq:block-lower}.

\emph{Case (iii): $L_\ell\ge 1$ and $i_\ell<j\le i_\ell L_\ell$.} Here all
terms equal $\mu(B(y,r_\ell))^s\ge(i_\ell+1)^{-1}$ and, using
$t-1>\log t$ for $t>1$ with $t=j/i_\ell$,
\[
\sum_{k=i_\ell+1}^{j}\nu_k\ \ge\ \frac{j-i_\ell}{i_\ell+1}
\ =\ \frac{j/i_\ell-1}{1+1/i_\ell}
\ >\ \frac{\log(j/i_\ell)}{1+1/i_\ell}
\ \ge\ \Bigl(1-\frac{1}{i_\ell}\Bigr)\log\frac{j}{i_\ell}
\ \ge\ \Bigl(1-\frac{1}{i_\ell}-\frac1e\Bigr)\log\frac{j}{i_\ell},
\]
which is \eqref{eq:block-lower} without even the boundary defect.

To obtain \eqref{eq:chained}, sum \eqref{eq:block-lower} with
$j=i_{\ell+1}$ (so with zero defect) over the full blocks
$\ell=n,\dots,m-1$, add the partial block $\ell=m$, $j=J$, bound all the
coefficients from below by $1-\frac{1}{i_n}-\frac1e$ (legitimate since the
logarithms are nonnegative and $i_\ell\ge i_n$), bound the single boundary
defect by $1$, and use
$\sum_{\ell=n}^{m-1}L_\ell+\log(J/i_m)=\log(J/i_n)$.

In particular, taking $n=n_0$ fixed with $i_{n_0}\ge 3$ and letting
$J\to\infty$ in \eqref{eq:chained} shows $\Phi(J)\to\infty$, i.e.
\begin{equation}\label{eq:phidiverges}
\sum_{k}\mu(A_k)^s=\infty,
\end{equation}
so the $s$-strong hypothesis applies to $\{A_k\}$.

\emph{Step 5: conclusion.} For $\delta\in\Q$, $\delta>0$, let
\[
G_\delta=\Bigl\{x :\
\frac{\log\tau_{B(y,r)}(x)}{-\log\mu(B(y,r))}>s(1+\delta)
\ \text{for arbitrarily small } r>0\Bigr\},
\]
so that $\{x:\overline{R}(x,y)>s\}=\bigcup_{\delta\in\Q^+}G_\delta$. It
suffices to show $\mu(G_\delta)=0$ for each $\delta$.

Fix $\delta$ and suppose, towards a contradiction, that some
$x\in G_\delta$ also lies in the full measure set where, by
\eqref{eq:phidiverges} and the $s$-strong hypothesis,
\begin{equation}\label{eq:ssbc-limit}
\lim_{N\to\infty}\frac{S_N(x)}{\Phi(N)}=1 .
\end{equation}
Every sufficiently small $r$ lies in a unique interval $[r_n,r_{n-1})$ with
$n=n(r)\to\infty$ as $r\to 0$. If $r\in[r_n,r_{n-1})$ realizes the
defining inequality of $G_\delta$, then, since $B(y,r)\supset B(y,r_n)$
implies $\tau_{B(y,r)}(x)\le\tau_{B(y,r_n)}(x)$, and since
$-\log\mu(B(y,r))>\frac1s\log i_n$ by \eqref{eq:sandwich},
\[
s(1+\delta)\ <\
\frac{\log\tau_{B(y,r)}(x)}{-\log\mu(B(y,r))}
\ \le\
\frac{s\,\log\tau_{B(y,r_n)}(x)}{\log i_n},
\]
hence $\tau_{B(y,r_n)}(x)>i_n^{1+\delta}$. This happens for infinitely
many $n$. For each such $n$ put $J_n=\lfloor i_n^{1+\delta}\rfloor$. For
every $k$ with $i_n\le k\le J_n$ we have $A_k\subset A_{i_n}=B(y,r_n)$ and
$k\le i_n^{1+\delta}<\tau_{B(y,r_n)}(x)$, hence $T^kx\notin A_k$. Therefore
\begin{equation}\label{eq:flat}
S_{J_n}(x)\ \le\ S_{i_n}(x).
\end{equation}
On the other hand, for $i_n\ge 2$ one has $J_n\ge i_n^{1+\delta}/2$, so
$\log(J_n/i_n)\ge\delta\log i_n-\log 2$, and \eqref{eq:chained},
\eqref{eq:phi-upper} give
\[
\frac{\Phi(J_n)}{\Phi(i_n)}
\ \ge\
1+\frac{\bigl(1-\frac{1}{i_n}-\frac1e\bigr)(\delta\log i_n-\log 2)-1}
{1+\log i_n}
\ \xrightarrow[n\to\infty]{}\
1+\Bigl(1-\frac1e\Bigr)\delta\ >\ 1 .
\]
Combining with \eqref{eq:flat},
\[
\frac{S_{J_n}(x)}{\Phi(J_n)}\ \le\
\frac{S_{i_n}(x)}{\Phi(i_n)}\cdot\frac{\Phi(i_n)}{\Phi(J_n)},
\]
where along the infinite subsequence of admissible $n$ the first factor
tends to $1$ by \eqref{eq:ssbc-limit} and the second has limit superior at
most $\bigl(1+(1-e^{-1})\delta\bigr)^{-1}<1$. Hence
$\liminf_n S_{J_n}(x)/\Phi(J_n)<1$, contradicting \eqref{eq:ssbc-limit}
along $N=J_n\to\infty$. Therefore $G_\delta$ is disjoint from a set of full
measure, i.e.\ $\mu(G_\delta)=0$, and the theorem follows, the final
sandwich being Corollary~\ref{cor:lowercont}.
\end{proof}

\begin{remark}\label{rem:ssbc}
The $s$-strong hypothesis must be interpreted with care when $s>1$. By
measure preservation $\mathbb{E}\,S_N=\sum_{n\le N}\mu(A_n)$, which
dominates the normalization $\sum_{n\le N}\mu(A_n)^s$. For sequences with
$\sum_{n\le N}\mu(A_n)\gg\sum_{n\le N}\mu(A_n)^s$ the $s$-strong property
therefore asserts that the almost sure number of hits falls
strictly below its expectation, and in particular it is
incompatible, on such sequences, with the classical strong Borel-Cantelli
property. Thus for $s>1$ Theorem~\ref{thm:main-limsup} should be regarded
as the formal closure of the correspondence rather than as a statement
expected to apply to standard examples.
\end{remark}

\section{Quantitative orbit approximation}\label{sec:approx}

The waiting time exponents describe how fast the orbit of $x$ approaches
the target point $y$. A convenient alternative formulation, going back to
Boshernitzan's quantitative recurrence theorem \cite{Bos}, considers the
quantities $\liminf_{n\ge 1}n^{\beta}d(T^nx,y)$. For general measure
preserving systems one has the universal upper bound proved in
\cite{BGI}: for every $y$ and a.e.\ $x$,
\begin{equation}\label{eq:bgi}
\liminf_{n\ge 1}\, n^{\beta}\, d(T^n x,y)=\infty
\qquad\text{whenever }\ \beta>\frac{1}{\ud_\mu(y)} .
\end{equation}
Galatolo and Kim \cite[Theorem~2.7]{GK} proved the complementary lower
statement for Borel-Cantelli systems. Its exponent-$s$ version is the
following. We use the convention $1/(s\cdot 0)=\infty$.

\begin{theorem}\label{thm:approx}
Let $s\ge 1$ and let $y\in\supp\mu$ with $\mu(\{y\})=0$. Assume that every
nonincreasing sequence of balls centered at $y$ with
$\sum_n\mu(B(y,\rho_n))^s=\infty$ is BC for $T$. Then for a.e.\ $x$,
\[
\liminf_{n\ge 1}\, n^{\beta}\, d(T^n x,y)=0
\qquad\text{for every }\
\beta<\frac{1}{s\,\ud_\mu(y)} .
\]
\end{theorem}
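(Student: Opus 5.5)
The plan is to build, for each fixed $\beta<1/(s\,\ud_\mu(y))$ and each $c>0$, a test sequence of radii $\rho_n=c\,n^{-\beta}$, check that it satisfies the $s$-divergence hypothesis, and read off the conclusion from the Borel-Cantelli property. First, if $\ud_\mu(y)=0$ the range of $\beta$ is all of $(0,\infty)$ (and $\beta\le 0$ is trivial, since $d(T^nx,y)\to$ small along some subsequence once any positive $\beta$ works). It suffices to treat a countable set of values: rational $\beta$ in the admissible range and $c=1/k$, $k\in\N$. Indeed, if for a.e.\ $x$ and every such pair we have $n^{\beta}d(T^nx,y)\le 1/k$ infinitely often, then $\liminf_n n^\beta d(T^nx,y)=0$ for rational $\beta$, and for real $\beta$ we pass to a rational $\beta'\in(\beta,1/(s\ud_\mu(y)))$, using $n^{\beta}d\le n^{\beta'}d$. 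The countable intersection of full measure sets is of full measure.

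For the key step, fix rational $\beta$ with $0<\beta<1/(s\,\ud_\mu(y))$ and $c>0$. Put $\rho_n=c\,n^{-\beta}$, which is decreasing to $0$. Choose $d'$ with $\ud_\mu(y)<d'<1/(s\beta)$, so that $s\beta d'<1$. By the definition of the lower local dimension as a $\liminf$, there are arbitrarily small radii $r$ with $\mu(B(y,r))\ge r^{d'}$. The problem is that this holds only along a sequence of radii, not for all $r$, so one cannot simply bound $\mu(B(y,\rho_n))^s\ge \rho_n^{sd'}$ term by term.

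The divergence is instead obtained by blocks, and this is the step I expect to be the main obstacle. Take one such radius $r$ small and let $n_r$ be the largest $n$ with $\rho_n\ge r$, so that $n_r\asymp (c/r)^{1/\beta}$. For all $n\le n_r$, monotonicity gives $\mu(B(y,\rho_n))\ge\mu(B(y,r))\ge r^{d'}$. Summing only over $n_r/2<n\le n_r$ then gives
\[
\sum_{n_r/2<n\le n_r}\mu(B(y,\rho_n))^s\ \ge\ \tfrac{n_r}{2}\,r^{sd'}\ \gtrsim\ r^{-1/\beta+sd'}\,c^{1/\beta},
\]
and this tends to $\infty$ as $r\to0$ because $sd'<1/\beta$. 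Taking a sequence of such radii tending to $0$, the blocks are eventually disjoint (or we simply note that a single block already exceeds any bound), so $\sum_n\mu(B(y,\rho_n))^s=\infty$. Care is only needed to make the integer-part bookkeeping ($n_r\ge 2$, $n_r\to\infty$) precise, and to check that $\mu(B(y,\rho_n))>0$, which holds since $y\in\supp\mu$.

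By hypothesis $\{B(y,\rho_n)\}$ is therefore BC. So for a.e.\ $x$ there are infinitely many $n$ with $d(T^nx,y)\le c\,n^{-\beta}$, i.e.\ $n^\beta d(T^nx,y)\le c$. Intersecting over the countable family of pairs $(\beta,c)$ as described above yields the theorem.
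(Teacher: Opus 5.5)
Your proof is correct and follows essentially the paper's argument: the test sequence $c\,n^{-\beta}$, the lower local dimension realized along a sequence of radii, monotonicity of ball measures on a block of indices, the BC hypothesis, and a countable intersection over $(\beta,c)$. The only difference is cosmetic. You use the strict slack $s\beta d'<1$ to make a single block's sum unbounded. The paper spends that slack to get $\mu(B_{n_i})\ge n_i^{-1/s}$, passes to a subsequence with $n_i\ge 2n_{i-1}$, and shows each block contributes at least $1/2$.
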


\begin{proof}
Fix $\beta<1/(s\,\ud_\mu(y))$ and $C>0$. It suffices to prove that for
a.e.\ $x$ one has $\liminf_n n^\beta d(T^nx,y)\le C$, since one may then
intersect the full measure sets over $C\in\{1/q:q\in\N\}$ and over a
sequence $\beta_j\uparrow 1/(s\,\ud_\mu(y))$, the statement being monotone
in $\beta$.

Write $\ud=\ud_\mu(y)$ and choose $\varepsilon>0$ with
$\beta(\ud+\varepsilon)<1/s$. By definition of the lower local dimension there
is a sequence $\rho_i\downarrow 0$ with
\begin{equation}\label{eq:dimlower}
\mu(B(y,\rho_i))\ \ge\ \rho_i^{\,\ud+\varepsilon}.
\end{equation}
Let $n_i=\lfloor (C/\rho_i)^{1/\beta}\rfloor$. Then $n_i\to\infty$,
$Cn_i^{-\beta}\ge\rho_i$, and for all large $i$ also
$n_i\ge\frac12(C/\rho_i)^{1/\beta}$, i.e.\
$\rho_i\ge C2^{-\beta}n_i^{-\beta}$. Combining with \eqref{eq:dimlower} and
monotonicity,
\[
\mu\bigl(B(y,Cn_i^{-\beta})\bigr)\ \ge\ \mu(B(y,\rho_i))
\ \ge\ (C2^{-\beta})^{\ud+\varepsilon}\, n_i^{-\beta(\ud+\varepsilon)}
\ \ge\ n_i^{-1/s}
\]
for all large $i$, because $\beta(\ud+\varepsilon)<1/s$ strictly. Passing to a
subsequence we may assume $n_i\ge 2n_{i-1}$ for all $i$.

The balls $B_n=B(y,Cn^{-\beta})$, $n\in\N$, form a nonincreasing sequence,
and, using monotonicity within the blocks $n_{i-1}<n\le n_i$,
\[
\sum_{n=1}^{\infty}\mu(B_n)^s
\ \ge\ \sum_{i}\,(n_i-n_{i-1})\,\mu(B_{n_i})^s
\ \ge\ \sum_{i}\frac{n_i-n_{i-1}}{n_i}
\ \ge\ \sum_i \frac12\ =\ \infty .
\]
By hypothesis $\{B_n\}$ is BC, so for a.e.\ $x$ there are infinitely many
$n$ with $d(T^nx,y)\le Cn^{-\beta}$, i.e.\
$\liminf_n n^\beta d(T^nx,y)\le C$, as required.
\end{proof}

\begin{remark}\label{rem:gap}
For $s>1$, Theorem~\ref{thm:approx} and \eqref{eq:bgi} leave the interval
$\beta\in[1/(s\,\ud_\mu(y)),\,1/\ud_\mu(y)]$ undecided, and this gap cannot
be closed at the present level of generality: it reflects genuinely
arithmetic information. On the circle, for instance, Kim \cite{Kim} proved
that for every irrational $\alpha$ and a.e.\ $y$ one has
$\liminf_n n\,\|n\alpha-y\|=0$, i.e.\ the extreme exponent
$\beta=1=1/\ud_\mu(y)$ is achieved for a.e.\ target, even for rotation
numbers for which no $s$MSTP with small $s$ holds. Theorem~\ref{thm:approx}
isolates the part of the phenomenon that follows from the Borel-Cantelli
property alone.
\end{remark}

\section{From waiting times to the Borel-Cantelli property}
\label{sec:converse}

In this section we prove the converse direction: waiting time estimates
with exponent $s$ produce BC sequences of centered balls, provided the
radii decrease in a way calibrated to $s$. The mechanism is a general
pointwise statement, essentially \cite[Proposition~3.5]{GK}, which we state
with an explicit non-degeneracy hypothesis.

\begin{proposition}[cf.\ {\cite[Proposition~3.5]{GK}}]\label{prop:general}
Let $x,y\in X$ with $T^nx\ne y$ for all $n\ge 1$. Let
$f:(0,\infty)\to(0,\infty)$ be a strictly decreasing bijection, and suppose
there is a sequence $\rho_k\downarrow 0$ with
\[
\tau_{B(y,\rho_k)}(x)\ \le\ f(\rho_k)\qquad\text{for all } k .
\]
Then every nonincreasing sequence $(r_n)$ with $r_n>f^{-1}(n)$ for all
sufficiently large $n$ satisfies
\[
x\in\limsup_{n}T^{-n}B(y,r_n).
\]
\end{proposition}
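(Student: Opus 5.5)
We argue directly from the definitions: we exhibit infinitely many indices $n$ with $T^nx\in B(y,r_n)$. Fix $N_0$ such that $r_n>f^{-1}(n)$ for all $n\ge N_0$. For each $k$ put $t_k=\tau_{B(y,\rho_k)}(x)$. By hypothesis $t_k\le f(\rho_k)<\infty$ and $T^{t_k}x\in B(y,\rho_k)$. Since $f$ is a strictly decreasing bijection, so is $f^{-1}$. Applying $f^{-1}$ to $t_k\le f(\rho_k)$ therefore gives $f^{-1}(t_k)\ge\rho_k$.

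The first key step is to note that at time $n=t_k$ the target is large enough. If $t_k\ge N_0$, then
\[
r_{t_k}>f^{-1}(t_k)\ge\rho_k ,
\]
so $B(y,\rho_k)\subset B(y,r_{t_k})$. Hence $T^{t_k}x\in B(y,r_{t_k})$, i.e.\ $x\in T^{-t_k}B(y,r_{t_k})$. Notice that monotonicity of $(r_n)$ is not even needed here; the comparison happens at the single time $t_k$.

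The second key step, and the only real obstacle, is to show that the times $t_k$ are unbounded. Only then do we get infinitely many distinct indices, all eventually $\ge N_0$. Here the non-degeneracy hypothesis $T^nx\ne y$ for all $n\ge1$ is used. Suppose, to the contrary, that $t_k\le M$ along a subsequence. Then for each such $k$ some $n\le M$ has $d(T^nx,y)\le\rho_k$. Pigeonholing over the finitely many $n\le M$, a single $n$ would satisfy $d(T^nx,y)\le\rho_k$ for infinitely many $k$. Since $\rho_k\to0$, this forces $T^nx=y$, a contradiction. Hence $t_k\to\infty$.

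Passing to a subsequence, we may take $N_0\le t_{k_1}<t_{k_2}<\cdots$, and each of these times is a hit. Therefore $x\in\limsup_n T^{-n}B(y,r_n)$, which proves the proposition.
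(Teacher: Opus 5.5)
Your proof is correct and follows essentially the paper's route: you use the same times $t_k=\tau_{B(y,\rho_k)}(x)$, the same comparison $r_{t_k}>f^{-1}(t_k)\ge\rho_k$, and the non-degeneracy hypothesis to force $t_k\to\infty$. The only difference is how you get unboundedness: you pigeonhole over $n\le M$, whereas the paper observes that $(t_k)$ is nondecreasing because the balls are nested, so a bounded sequence would be eventually constant; your version needs only $\rho_k\to 0$ and not monotonicity of $(\rho_k)$.
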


\begin{proof}
Let $t_k=\tau_{B(y,\rho_k)}(x)$, which is finite since $t_k\le
f(\rho_k)$, and note $T^{t_k}x\in B(y,\rho_k)$. As the balls $B(y,\rho_k)$
are nonincreasing, $(t_k)$ is nondecreasing. If $(t_k)$ were bounded it
would be eventually constant, say $t_k=t$ for $k\ge k_0$, and then
$d(T^tx,y)\le\rho_k\to 0$ would give $T^tx=y$, which is excluded. Hence
$t_k\to\infty$ and the set $\{t_k:k\in\N\}$ is infinite.

Since $f^{-1}$ is strictly decreasing and $t_k\le f(\rho_k)$, we get
$f^{-1}(t_k)\ge f^{-1}(f(\rho_k))=\rho_k$. For all large $k$ (so that
$t_k$ is large enough for the hypothesis on $(r_n)$),
\[
r_{t_k}\ >\ f^{-1}(t_k)\ \ge\ \rho_k ,
\]
hence $B(y,\rho_k)\subset B(y,r_{t_k})$ and
$T^{t_k}x\in B(y,r_{t_k})$. As the integers $t_k$ take infinitely many
values, $x\in\limsup_n T^{-n}B(y,r_n)$.
\end{proof}

\begin{theorem}\label{thm:converse}
Let $x,y\in X$ with $T^nx\ne y$ for all $n\ge 1$, and suppose
\[
w\ :=\ \liminf_{r\to 0}\frac{\log\tau_{B(y,r)}(x)}{-\log r}\ <\ \infty .
\]
Let $(r_n)$ be a nonincreasing sequence of radii with
\[
\limsup_{n\to\infty}\frac{-\log r_n}{\log n}\ <\ \frac1w .
\]
Then $x\in\limsup_n T^{-n}B(y,r_n)$.
\end{theorem}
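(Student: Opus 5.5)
We reduce Theorem~\ref{thm:converse} to Proposition~\ref{prop:general} with a power function $f$. Put $\lambda:=\limsup_n\frac{-\log r_n}{\log n}<1/w$. Choose $w'>w$ (with $w'>0$) so close to $w$ that $\lambda<1/w'$, and then $\gamma$ with $\lambda<\gamma<1/w'$. If $w=0$ any $w'>0$ with $1/w'>\lambda$ works. Note that $\lambda$ may be negative or $-\infty$; this causes no harm. Define $f(\rho)=\rho^{-1/\gamma}$. This is a strictly decreasing bijection $(0,\infty)\to(0,\infty)$ with $f^{-1}(n)=n^{-\gamma}$.

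\emph{Step 1: the hypothesis on $\tau$.} Since $w<w'$, the definition of $\liminf$ gives a sequence of radii $\rho_k\downarrow 0$, which we may take with $\rho_k<1$, along which
\[
\frac{\log\tau_{B(y,\rho_k)}(x)}{-\log\rho_k}<w',
\qquad\text{i.e.}\qquad \tau_{B(y,\rho_k)}(x)<\rho_k^{-w'}.
\]
In particular these $\tau$'s are finite. Since $\rho_k<1$ and $1/\gamma>w'$, we get $\rho_k^{-w'}\le\rho_k^{-1/\gamma}=f(\rho_k)$. Hence $\tau_{B(y,\rho_k)}(x)\le f(\rho_k)$ for all $k$.

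\emph{Step 2: the hypothesis on $(r_n)$.} Since $\lambda<\gamma$, for all large $n$ we have $-\log r_n<\gamma\log n$, i.e. $r_n>n^{-\gamma}=f^{-1}(n)$. The sequence $(r_n)$ is nonincreasing by assumption. The condition $T^nx\ne y$ is assumed in the theorem.

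\emph{Step 3: conclusion.} Proposition~\ref{prop:general} now gives $x\in\limsup_nT^{-n}B(y,r_n)$.

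The only delicate point is the bookkeeping of strict inequalities: we need room both between $w$ and $w'$ and between $\lambda$ and $1/w'$, and this is exactly what the strict hypothesis $\limsup<1/w$ supplies. The degenerate cases also need care: $w=0$ is handled by taking any small $w'>0$, and $\lambda\le 0$ makes Step 2 trivial.
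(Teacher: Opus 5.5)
Your proof is correct and is essentially the paper's argument. Both reduce the statement to Proposition~\ref{prop:general} with a power function $f$; the paper takes $f(\rho)=\rho^{-(w+\varepsilon)}$, while you put the slack into the exponent $1/\gamma$, which is only a cosmetic difference.

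One small correction: because $(r_n)$ is nonincreasing, $\lambda\ge 0$ automatically, and this is what guarantees $\gamma>0$, which you need for $f$ to be a decreasing bijection. Your aside that $\lambda$ "may be negative" is therefore mistaken, and if it were true you would have to choose $\gamma>\max(\lambda,0)$.
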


\begin{proof}
Let $\theta=\limsup_n(-\log r_n)/\log n$, so $\theta w<1$, and choose
$\varepsilon>0$ with $(\theta+\varepsilon)(w+\varepsilon)<1$. By definition of $w$
there is a sequence $\rho_k\downarrow 0$ with
$\tau_{B(y,\rho_k)}(x)\le\rho_k^{-(w+\varepsilon)}$. Put
$f(\rho)=\rho^{-(w+\varepsilon)}$, a strictly decreasing bijection of
$(0,\infty)$ with $f^{-1}(u)=u^{-1/(w+\varepsilon)}$. For all large $n$ we
have $-\log r_n\le(\theta+\varepsilon)\log n$, i.e.
\[
r_n\ \ge\ n^{-(\theta+\varepsilon)}\ >\ n^{-1/(w+\varepsilon)}\ =\ f^{-1}(n),
\]
the strict inequality because $\theta+\varepsilon<1/(w+\varepsilon)$ and
$n\ge 2$. Proposition~\ref{prop:general} applies and gives the claim.
\end{proof}

The exponent-$s$ Borel-Cantelli statement now follows by an almost
everywhere application.

\begin{corollary}\label{cor:converse-ae}
Let $s\ge 1$ and let $y\in X$ with $\mu(\{y\})=0$ and
$\od_\mu(y)<\infty$. Suppose that
\begin{equation}\label{eq:swt-ae}
\liminf_{r\to 0}\frac{\log\tau_{B(y,r)}(x)}{-\log\mu(B(y,r))}\ \le\ s
\qquad\text{for a.e.\ } x
\end{equation}
(e.g.\ $\underline{R}(\cdot,y)\le s$ a.e., which holds under the
hypotheses of Theorem~\ref{thm:main-liminf}, or under the $s$-waiting time
problem at $y$). Then every nonincreasing sequence $(r_n)$ with
\begin{equation}\label{eq:calibration}
\limsup_{n\to\infty}\frac{-\log r_n}{\log n}\ <\ \frac{1}{s\,\od_\mu(y)}
\end{equation}
satisfies $\mu\bigl(\limsup_n T^{-n}B(y,r_n)\bigr)=1$, i.e.\
$\{B(y,r_n)\}$ is a BC sequence.

Moreover, if the local dimension $d_\mu(y)$ exists in $(0,\infty)$, every
sequence satisfying \eqref{eq:calibration} also satisfies
$\sum_n\mu(B(y,r_n))^s=\infty$. Thus the corollary produces BC sequences
lying inside the $s$-divergence class of Definition~\ref{def:smstp}, and is
a partial converse to Theorem~\ref{thm:main-liminf}.
\end{corollary}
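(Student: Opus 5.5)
The plan is to reduce the corollary to Theorem~\ref{thm:converse}, applied pointwise. The first step converts the hypothesis \eqref{eq:swt-ae}, which is stated on the scale $-\log\mu(B(y,r))$, into a bound on $w(x)=\liminf_{r\to 0}\log\tau_{B(y,r)}(x)/(-\log r)$.

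\emph{Change of scale.} Fix $x$ in the full measure set where \eqref{eq:swt-ae} holds and fix $\varepsilon>0$. By definition of $\od_\mu(y)$, for all small $r$ we have
\[
-\log\mu(B(y,r))\le(\od_\mu(y)+\varepsilon)(-\log r).
\]
Take a sequence $\rho_k\downarrow 0$ along which the ratio in \eqref{eq:swt-ae} is at most $s+\varepsilon$. Along this sequence,
\[
\log\tau_{B(y,\rho_k)}(x)\le(s+\varepsilon)(\od_\mu(y)+\varepsilon)(-\log\rho_k).
\]
Since $\varepsilon$ is arbitrary, $w(x)\le s\,\od_\mu(y)<\infty$. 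The finiteness of $\tau$ along the $\rho_k$ is implicit, since the ratio is finite. There is an edge case when $\od_\mu(y)=0$. The right side of \eqref{eq:calibration} is then $+\infty$, so any $\theta<\infty$ is allowed, and $w(x)=0$, so $1/w=\infty$. Theorem~\ref{thm:converse} covers this case, because its proof only needs $\theta w<1$, with the convention $1/0=\infty$.

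\emph{Applying the theorem.} The condition \eqref{eq:calibration} gives $\theta<1/(s\,\od_\mu(y))\le 1/w(x)$. Theorem~\ref{thm:converse} then yields $x\in\limsup_n T^{-n}B(y,r_n)$, provided $T^nx\ne y$ for all $n\ge1$. This non-degeneracy holds a.e. Indeed, $\{x:T^nx=y\}=T^{-n}\{y\}$ has measure $\mu(\{y\})=0$, and a countable union over $n$ is null. Intersecting the two full measure sets gives $\mu(\limsup_n T^{-n}B(y,r_n))=1$.

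\emph{Divergence claim.} Assume $d=d_\mu(y)\in(0,\infty)$. Choose $\varepsilon>0$ with $(\theta+\varepsilon)(d+\varepsilon)s<1$. For large $n$ we have $r_n\ge n^{-(\theta+\varepsilon)}$. Since $r_n$ is nonincreasing and positive, either $r_n\to0$, or $r_n\ge\rho>0$, in which case the measures are bounded below by $\mu(B(y,\rho))$ and the sum trivially diverges. This uses $y\in\supp\mu$, which follows from $d<\infty$. In the case $r_n\to 0$, for large $n$,
\[
\mu(B(y,r_n))\ge r_n^{d+\varepsilon}\ge n^{-(\theta+\varepsilon)(d+\varepsilon)}.
\]
Hence $\mu(B(y,r_n))^s\ge n^{-\gamma}$ with $\gamma<1$, and the sum diverges.

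The main obstacle I expect is bookkeeping rather than substance: the degenerate cases ($\od_\mu(y)=0$, radii not tending to zero, or the ratio in \eqref{eq:swt-ae} possibly being infinite at points where $\tau=\infty$). I would handle these by noting that \eqref{eq:swt-ae} already forces finite $\tau$ along a sequence of radii.
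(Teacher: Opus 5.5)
Your proposal is correct and follows the paper's proof essentially step by step. You bound $w(x)\le s\,\od_\mu(y)$ using the upper local dimension, discard the null set $\bigcup_{n\ge1}T^{-n}\{y\}$, and apply Theorem~\ref{thm:converse} pointwise. You then get divergence from $\mu(B(y,r_n))^s\ge n^{-s(\theta+\varepsilon)(d+\varepsilon)}$. Your extra treatment of the edge cases $\od_\mu(y)=0$ and $r_n\not\to 0$ fills in details the paper leaves implicit.
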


\begin{proof}
The set $\bigcup_{n\ge 1}T^{-n}\{y\}$ is $\mu$-null, since
$\mu(T^{-n}\{y\})=\mu(\{y\})=0$ by measure preservation. Fix $x$ outside
this set and inside the full measure set where \eqref{eq:swt-ae} holds. For
$\varepsilon>0$, choose radii $r\downarrow 0$ along which
$\log\tau_{B(y,r)}(x)\le(s+\varepsilon)\bigl(-\log\mu(B(y,r))\bigr)$. Since
$-\log\mu(B(y,r))\le(\od_\mu(y)+\varepsilon)(-\log r)$ for all small $r$, we
obtain along the same radii
$\log\tau_{B(y,r)}(x)\le(s+\varepsilon)(\od_\mu(y)+\varepsilon)(-\log r)$, whence
\[
w(x)\ :=\ \liminf_{r\to 0}\frac{\log\tau_{B(y,r)}(x)}{-\log r}
\ \le\ (s+\varepsilon)(\od_\mu(y)+\varepsilon)
\qquad\text{for every }\varepsilon>0,
\]
so $w(x)\le s\,\od_\mu(y)$. Condition \eqref{eq:calibration} then reads
$\limsup_n(-\log r_n)/\log n<1/(s\od_\mu(y))\le 1/w(x)$, and
Theorem~\ref{thm:converse} gives $x\in\limsup_nT^{-n}B(y,r_n)$. As this
holds for a.e.\ $x$, the sequence is BC.

For the last statement, let $\theta<1/(sd)$ with $d=d_\mu(y)$ denote the
$\limsup$ in \eqref{eq:calibration} and pick $\varepsilon>0$ with
$s(\theta+\varepsilon)(d+\varepsilon)<1$. For all large $n$,
$r_n\ge n^{-(\theta+\varepsilon)}$ and
$\mu(B(y,r_n))\ge r_n^{\,d+\varepsilon}\ge n^{-(\theta+\varepsilon)(d+\varepsilon)}$,
hence $\mu(B(y,r_n))^s\ge n^{-s(\theta+\varepsilon)(d+\varepsilon)}$ with exponent
strictly less than $1$, and the series diverges.
\end{proof}

\begin{remark}
For $s=1$, Theorem~\ref{thm:converse} and Corollary~\ref{cor:converse-ae}
recover \cite[Theorem~3.4]{GK} and its consequences. Likewise, the
Borel-Cantelli results of \cite{GK} for Axiom~A systems (via the waiting
time estimates of \cite{Gal2}) and for typical interval exchange
transformations (\cite[Theorem~3.7]{GK}, via
Proposition~\ref{prop:general} with $f(\rho)=\tfrac{4}{C\rho}$) are
instances of the mechanism above with exponent $1$. Any system in which a
polynomial waiting time estimate
$\tau_{B(y,\rho)}(x)\le\rho^{-a}$ holds along a sequence of radii yields,
in the same way, BC sequences for all nonincreasing $r_n$ decaying slower
than $n^{-1/a}$.
\end{remark}

\section{Circle rotations and sharpness}\label{sec:rotations}

Let $\T=\R/\Z$, let $\mu$ be the Haar probability measure, and for
irrational $\alpha$ let $T_\alpha(x)=x+\alpha$. Write $\|t\|$ for the
distance of $t\in\R$ to the nearest integer, so that
$\mu(B(y,r))=2r$ for $0<r<\tfrac12$ and $d_\mu\equiv 1$. For $\sigma\ge 0$
put
\[
\Omega(\sigma)=\bigl\{\alpha\in\R :\ \exists\,C>0\ \forall k\in\N,\
\|k\alpha\|\ge C k^{-(1+\sigma)}\bigr\},
\]
and recall that the type of $\alpha$ is
\[
\eta(\alpha)=\sup\bigl\{\beta>0:\ \liminf_{k\to\infty}k^{\beta}\|k\alpha\|=0
\bigr\}\ \in[1,\infty],
\]
the lower bound $\eta\ge1$ being Dirichlet's theorem. One has
$\alpha\in\Omega(\sigma)\Rightarrow\eta(\alpha)\le 1+\sigma$, and
$\eta(\alpha)<1+\sigma\Rightarrow\alpha\in\Omega(\sigma)$; $\Omega(0)$ is
the set of badly approximable (constant type) numbers, and Liouville
numbers have $\eta=\infty$ and belong to no $\Omega(\sigma)$.

The following results describe the shrinking target and waiting time
behavior of rotations completely.

\begin{theorem}[Kurzweil \cite{Kur}, see also Fayad \cite{Fay}]
\label{thm:kurzweil}
$T_\alpha$ has the MSTP if and only if $\alpha\in\Omega(0)$.
\end{theorem}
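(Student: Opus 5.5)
We would prove the two implications separately. In both directions we reduce to a statement about the orbit points $y-n\alpha$, since $T_\alpha^{-n}B(y,r)=B(y-n\alpha,r)$, and we use the standard arithmetic of the convergent denominators $q_k$ of $\alpha$.

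\emph{Zero--one law.} Let $(r_n)$ be nonincreasing and put $L=\limsup_n T_\alpha^{-n}B(y,r_n)$. Since $T_\alpha^{-(n+1)}B(y,r_{n+1})\subset T_\alpha^{-1}\bigl(T_\alpha^{-n}B(y,r_n)\bigr)$, we get $L\subset T_\alpha^{-1}L$. Measure preservation then makes $L$ invariant mod $0$, and ergodicity of the irrational rotation gives $\mu(L)\in\{0,1\}$. Hence in the sufficiency direction it is enough to show $\mu(L)>0$.

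\emph{Sufficiency ($\alpha\in\Omega(0)\Rightarrow$ MSTP).} We would use the Kochen--Stone (Erd\H{o}s--R\'enyi) form of the divergence Borel--Cantelli lemma:
\[
\mu(L)\ \ge\ \limsup_{N\to\infty}\frac{\bigl(\sum_{n\le N}\mu(E_n)\bigr)^2}{\sum_{n,m\le N}\mu(E_n\cap E_m)},\qquad E_n=B(y-n\alpha,r_n).
\]
\begin{enumerate}
\item For $n<m$ we have $r_m\le r_n$, so
\[
\mu(E_n\cap E_m)\le 2r_m\,\ind\bigl\{\|(m-n)\alpha\|\le 2r_n\bigr\}.
\]
\item Bad approximability ($\|k\alpha\|\ge C/k$) separates the points $k\alpha$, $1\le k\le K$, by $\gg 1/K$. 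Hence
\[
\#\{1\le k\le K:\ \|k\alpha\|\le\delta\}\le C'(K\delta+1).
\]
\item We sum over $m$ and then over $n$ in dyadic blocks of the gap $k=m-n$, using the monotonicity of $r_n$ to replace $r_n$ by block values. The aim is
\[
\sum_{n,m\le N}\mu(E_n\cap E_m)\le C''\Bigl(\bigl(\textstyle\sum_{n\le N} r_n\bigr)^2+\sum_{n\le N} r_n\Bigr).
\]
\item Since $\sum r_n=\infty$, the Kochen--Stone ratio stays bounded below, so $\mu(L)>0$, and the zero--one law gives $\mu(L)=1$.
\end{enumerate}
The delicate point is the bookkeeping in step 3. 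The term $2r_m$ must be paired against the counting bound at scale $r_n$, and the crude count can waste a factor. We would first try splitting according to whether $k\le 1/r_n$, where the $+1$ term dominates and contributes $\sum_m r_m$, or $k>1/r_n$, where the $K\delta$ term gives a product of sums.

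\emph{Necessity (MSTP $\Rightarrow\alpha\in\Omega(0)$).} If $\alpha\notin\Omega(0)$, choose convergents with $\varepsilon_k:=q_k\|q_k\alpha\|\to 0$ as fast as we like along a subsequence. Over a time window $n\in[M_k,M_k+H_k]$, the points $n\alpha$ lie within $H_k\|q_k\alpha\|$ of the $q_k$ points of a $1/q_k$-lattice translate. Consequently the union $\bigcup_{n}B(y-n\alpha,\rho_k)$ over the window has measure at most $q_k\bigl(2\rho_k+H_k\|q_k\alpha\|\bigr)$.

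We would choose $H_k\approx 1/(k^2\|q_k\alpha\|)$ and $\rho_k=1/(k^2q_k)$, and define $r_n=\rho_k$ on the $k$-th window. Each window then contributes $\asymp H_k\rho_k=1/(k^4\varepsilon_k)$ to $\sum r_n$, which diverges once we pick $\varepsilon_k\le k^{-4}$. On the other hand, the union over the $k$-th window has measure $O(k^{-2})$, which is summable, so the classical Borel--Cantelli lemma gives $\mu(L)=0$.

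The remaining task is to fill the gaps between windows with radii that keep $(r_n)$ nonincreasing without destroying either estimate. This is where we expect the main obstacle, because the windows have to be placed consistently with a sparse subsequence of the $q_k$. We would first try setting $r_n=\rho_{k+1}$ on the gap before window $k+1$ and bounding that gap's union crudely by the same lattice argument at level $q_k$.
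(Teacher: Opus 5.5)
\emph{Context.} The paper gives no proof of this statement. It is quoted from Kurzweil and Fayad, so your outline has to stand on its own. In substance it does, and it follows the standard route: a zero--one law via $L\subset T_\alpha^{-1}L$, Kochen--Stone with the separation count for sufficiency, and a convergent-window construction for necessity.

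\emph{Sufficiency.} Your split of Step~3 at $k\approx 1/r_n$ works. It is what prevents the $\log$ loss that a purely dyadic count would incur, since the $+1$ terms would then leave about $\sum_m r_m\log m$. Note also that $\|k\alpha\|\ge C/k$ excludes every return with $k<C/(2r_n)$. For $2^j\gtrsim 1/r_n$ the $+1$ is absorbed into $2^jr_n$. So you do get $\sum_{n<m\le N}\mu(E_n\cap E_m)\lesssim\sum_{n\le N}r_n+\bigl(\sum_{n\le N}r_n\bigr)^2$, as you aim.

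\emph{Necessity: the lattice estimate.} As written, this estimate is off by a factor $q_k$ and does not give what you claim. With $H_k\approx 1/(k^2\|q_k\alpha\|)$ and $\rho_k=1/(k^2q_k)$, your bound $q_k(2\rho_k+H_k\|q_k\alpha\|)$ equals $2k^{-2}+q_kk^{-2}$, which is not summable. The correct statement is that for $M\le n\le M+H$,
\[
n\alpha = M\alpha+(n-M)\tfrac{p_k}{q_k}+(n-M)\bigl(\alpha-\tfrac{p_k}{q_k}\bigr),\qquad \bigl|\alpha-\tfrac{p_k}{q_k}\bigr|=\tfrac{\|q_k\alpha\|}{q_k}.
\]
Hence the centres $y-n\alpha$ lie within $H\|q_k\alpha\|/q_k$ of the $q_k$ points $y-M\alpha+q_k^{-1}\Z$. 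The window union therefore has measure at most $2q_k\rho_k+2H_k\|q_k\alpha\|\le 4k^{-2}$, provided $H_k=\lfloor 1/(k^2\|q_k\alpha\|)\rfloor$.

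\emph{Necessity: the gap-filling step.} The obstacle you anticipate does not exist. The bound above is independent of the starting time $M$, so no gaps are needed. Take $M_1=1$, $M_{k+1}=M_k+H_k+1$, and $r_n=\rho_k$ on $[M_k,M_{k+1})$. Then:
\begin{itemize}
\item $(r_n)$ is nonincreasing, because $k^2q_k$ increases.
\item $\sum_n 2r_n\ge\sum_k 2/(k^4\varepsilon_k)=\infty$ once $\varepsilon_k\le k^{-4}$, since $H_k+1>1/(k^2\|q_k\alpha\|)$.
\item Each window union $U_k$ involves only finitely many $n$, so $\limsup_n T_\alpha^{-n}B(y,r_n)\subset\limsup_k U_k$. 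The latter is null by the convergence Borel--Cantelli lemma, because $\sum_k\mu(U_k)\le\sum_k 4k^{-2}<\infty$.
\end{itemize}
With these two corrections the argument is complete, and it shows the BC property fails at every $y$ when $\alpha\notin\Omega(0)$.
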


\begin{theorem}[Tseng {\cite[Theorem~1.5]{Tse}}]\label{thm:tseng}
Let $s\ge 1$. Then $T_\alpha$ has the $s$MSTP if and only if
$\alpha\in\Omega(s-1)$.
\end{theorem}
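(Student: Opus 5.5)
The plan is to work with the continued fraction denominators $q_k$ of $\alpha$ and the three-gap structure of the orbit segments $\{n\alpha\}$, following Kurzweil's proof of Theorem~\ref{thm:kurzweil} with the exponent $1$ replaced by $s$. Since $\mu(B(y,r))=2r$, the $s$MSTP condition reads $\sum_n r_n^s=\infty$. The Diophantine input is $\|q_k\alpha\|\asymp q_{k+1}^{-1}$, which translates $\alpha\in\Omega(s-1)$ into $q_{k+1}\le C q_k^{s}$ for all $k$. Because the system is an isometry, $\limsup_n T_\alpha^{-n}B(y,r_n)$ is the same as $\{x:\|x+n\alpha-y\|\le r_n \text{ i.o.}\}$. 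Replacing $x$ by $x-y$, it suffices to treat $y=0$, i.e.\ the set of $x$ with $\|x+n\alpha\|\le r_n$ infinitely often.

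\emph{Sufficiency.} Assume $q_{k+1}\le Cq_k^s$ and $\sum r_n^s=\infty$ with $r_n$ nonincreasing.
\begin{enumerate}
\item First reduce to the critical regime. Replacing $r_n$ by $\min(r_n, n^{-1/s})$ keeps divergence of $\sum r_n^s$ (a standard condensation argument for monotone sequences) and only shrinks the targets.
\item Group the times into blocks $[q_k,q_{k+1})$. By the three-gap theorem the points $\{n\alpha\}$, $q_k\le n<q_k+q_{k+1}$, have all gaps between $c/q_{k+1}$ and $2/q_k$.
\item Estimate each block. Let $\rho_k=r_{2q_{k+1}}$ be the smallest radius used in the block. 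If $\rho_k\gtrsim 1/q_k$, the arcs $B(-n\alpha,\rho_k)$ for $n$ in the block cover the whole circle, and the block contributes a hit for every $x$. Otherwise the arcs have bounded overlap, and the union has measure $\gtrsim \sum_{n\in\text{block}} r_n$. Via $q_{k+1}\le Cq_k^s$ and monotonicity, this quantity dominates a constant times $\sum_{n\in\text{block}} r_n^s$ in the critical regime.
\item Run a Chung--Erd\H{o}s/quasi-independence argument between separated blocks, using the same equidistribution at scale $1/q_k$. This shows the limsup set has measure bounded below by a constant inside every interval.
\item Conclude by Lebesgue density, or by ergodicity of $T_\alpha$ (the limsup set is invariant up to null sets for monotone radii), that the limsup set has full measure.
\end{enumerate}

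\emph{Necessity.} If $\alpha\notin\Omega(s-1)$, there are infinitely many $k$ with $\|q_k\alpha\|\le \varepsilon_k q_k^{-s}$ and $\varepsilon_k\to0$. Passing to a sparse subsequence, I would follow Fayad's construction. On the block $n\in[q_k, M_k)$ with $M_k=\varepsilon_k^{-1/2}q_k^{s}$, set the radius to $r_n=q_k^{-1}\varepsilon_k^{1/2}$ roughly. Then each block contributes $\asymp \varepsilon_k^{-1/2}q_k^{s}\cdot q_k^{-s}\varepsilon_k^{s/2}$, and one tunes the exponents so that the sum $\sum r_n^s$ diverges. On such a block the orbit $\{n\alpha\}$ stays within $M_k\|q_k\alpha\|\ll q_k^{-1}$ of the $q_k$ points $j/q_k$. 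Hence the union of the targets over the block lies in $q_k$ arcs of total length $o(1)$, summable after sparsifying. The first Borel--Cantelli lemma then gives a null limsup set, so the sequence is not BC.

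The main obstacle I expect is the sufficiency step~4: obtaining quasi-independence uniformly across blocks when $q_{k+1}/q_k$ can be as large as $q_k^{s-1}$. In that case one block contains many scales at which the orbit is far from equidistributed. I would first try to handle this by splitting each long block further at multiples of $q_k$ and using that $x\mapsto x+q_k\alpha$ moves points by at most $1/q_{k+1}$. A second, related difficulty in the necessity part is choosing the exponents so that divergence and summability hold simultaneously.
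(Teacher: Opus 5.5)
The paper does not prove this statement. It is quoted from Tseng and used only as input in Section~\ref{sec:rotations}, so your sketch has to stand on its own, and its sufficiency half has a genuine gap.

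\textbf{Sufficiency.} Your framework is fine: $\alpha\in\Omega(s-1)\iff q_{k+1}\le Cq_k^s$, the truncation $r_n\mapsto\min(r_n,n^{-1/s})$, and the $0$--$1$ step, which is valid because $L=\limsup_nT_\alpha^{-n}B(y,r_n)$ satisfies $L\subset T_\alpha^{-1}L$ for nonincreasing radii.

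Step~3, however, is false. The points $-n\alpha$ and $-(n+aq_k)\alpha$ lie within $a\|q_k\alpha\|$ of each other. So suppose a radius $\rho\gg\|q_k\alpha\|$ is used on $[q_k,2^jq_k)$ and the radii drop afterwards. Then the arcs $E_n=B(-n\alpha,r_n)$ form $q_k$ clusters whose union has measure at most $q_k(2\rho+2^j\|q_k\alpha\|)$, far below $\sum_n r_n\approx 2^jq_k\rho$. The overlap is not bounded. (A warning sign: you invoked $q_{k+1}\le Cq_k^s$ to pass from $\sum r_n$ to $\sum r_n^s$, which is trivial since $r_n\le1$.)

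The correct statement is $\mu(\bigcup_{n\in P}E_n)\gtrsim\min\{1,\sum_{n\in P}r_n^s\}$ for a dyadic piece $P=[2^jq_k,2^{j+1}q_k)$ carrying radius $\rho$. The hypothesis enters here: for $\rho\ge\|q_k\alpha\|$ one needs $q_k\rho\gtrsim 2^jq_k\rho^s$, i.e.\ $2^j\rho^{s-1}\le(2^jq_k)^{1/s}/q_k\le C^{1/s}$.

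The decisive gap is step~4. It carries the real content of the theorem and is only announced. Equidistribution at scale $1/q_k$ cannot supply it, because for $s>1$ quasi-independence fails even between consecutive blocks. Here is why.
\begin{itemize}
\item Let $G_k=\bigcup_{q_k\le n<q_{k+1}}E_n$. Each centre $-n\alpha$ of block $k$ is the base of the cluster $-(n+aq_{k+1})\alpha$, $a\ge1$, of block $k+1$, which lies within $a\|q_{k+1}\alpha\|$ of it.
\item Suppose block $k$ carries a radius $r<\|q_k\alpha\|/2$, so its arcs are disjoint and $\mu(G_k)=2r(q_{k+1}-q_k)$.
\item Suppose the rows $a\le A$ of block $k+1$ carry a radius $r'$ with $A\|q_{k+1}\alpha\|+r'\le r$, and the rest of that block is negligible.
\item Then every cluster based in block $k$ lies in $G_k$, and $\mu(G_k\cap G_{k+1})\gtrsim\mu(G_k)\mu(G_{k+1})/(rq_{k+1})$.
\end{itemize}
Only $r>\|q_{k+1}\alpha\|$ is forced, so this factor can be of order $q_{k+2}/q_{k+1}$, i.e.\ up to $q_{k+1}^{s-1}$ under $\Omega(s-1)$. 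It stays bounded only for badly approximable $\alpha$.

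A proof must control these resonant cross terms $n\mapsto n+aq_{k+1}$ on average. Your proposed splitting of long blocks at multiples of $q_k$ describes only the within-block structure and does not touch them. As it stands, sufficiency is unproved.

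\textbf{Necessity.} The construction is the right one, but your parameters are those of $s=1$ and fail for $s>1$.
\begin{itemize}
\item The drift bound is off: $n\alpha$ lies within $n\|q_k\alpha\|/q_k$ of $np_k/q_k\in q_k^{-1}\Z$, not within $M_k\|q_k\alpha\|$. With your numbers the latter equals $\varepsilon_k^{1/2}$, which is not $\ll q_k^{-1}$.
\item More seriously, radius $\varepsilon_k^{1/2}/q_k$ on $[q_k,\varepsilon_k^{-1/2}q_k^s)$ contributes $\asymp\varepsilon_k^{(s-1)/2}\to0$ to $\sum_n\mu(B_n)^s$, but $\asymp\varepsilon_k^{1/2}$ to the measure of the union of targets. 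For $s\ge2$ one has $\varepsilon_k^{(s-1)/2}\le\varepsilon_k^{1/2}$, so the blocks can never make the first series diverge while keeping the second summable. For $1<s<2$ the outcome depends on how fast $\varepsilon_k\to0$, which you do not control.
\end{itemize}

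The fix is to replace the exponent $1/2$ by $1/(s+1)$.
\begin{itemize}
\item Set $\varepsilon_k=q_k^s\|q_k\alpha\|$. Then $\liminf_k\varepsilon_k=0$ when $\alpha\notin\Omega(s-1)$, since $\|m\alpha\|\ge\|q_k\alpha\|$ for $q_k\le m<q_{k+1}$.
\item Set $\eta_k=\varepsilon_k^{1/(s+1)}$, and use radius $\eta_k/q_k$ on $[q_k,M_k)$ with $M_k=\lfloor\eta_k/\|q_k\alpha\|\rfloor$. Once $\eta_k<1/2$ this gives $2q_k\le M_k<q_{k+1}$.
\item The block's $s$-sum is then $\asymp\eta_k^{s+1}/\varepsilon_k=1$. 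All its targets lie in the $q_k$ arcs of radius $2\eta_k/q_k$ about $q_k^{-1}\Z$, of total measure $4\eta_k$.
\item Choose $k_1<k_2<\cdots$ with $\eta_{k_j}$ decreasing and $\sum_j\eta_{k_j}<\infty$. Give the gap $[M_{k_j},q_{k_{j+1}})$ the radius $\eta_{k_{j+1}}/q_{k_{j+1}}$; its union bound is at most $2\eta_{k_{j+1}}$.
\end{itemize}
The radii are then nonincreasing and $\sum_n\mu(B_n)^s=\infty$, while the first Borel--Cantelli lemma shows the limsup set is null.
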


\begin{theorem}[Kim-Seo \cite{KS}]\label{thm:ks}
Let $\alpha$ be irrational of type $\eta=\eta(\alpha)$. Then for almost
every $x$ and $y$,
\[
\liminf_{r\to 0}\frac{\log\tau_{B(y,r)}(x)}{-\log r}=1,
\qquad
\limsup_{r\to 0}\frac{\log\tau_{B(y,r)}(x)}{-\log r}=\eta .
\]
\end{theorem}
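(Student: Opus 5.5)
Since the statement concerns the rotation $T_\alpha$, I would first reduce everything to one variable. Because $T_\alpha^n x\in B(y,r)$ iff $\|n\alpha-z\|\le r$ with $z=y-x$, the waiting time $\tau_{B(y,r)}(x)=\tau_r(z):=\min\{n\ge1:\|n\alpha-z\|\le r\}$ depends only on $z$. By Fubini it therefore suffices to prove both identities for a.e.\ $z\in\T$. Also $\mu(B(y,r))=2r$, so the normalisations $-\log r$ and $-\log\mu(B(y,r))$ agree asymptotically. Throughout I use the continued fraction denominators $q_k$ of $\alpha$ and the standard estimates $\frac{1}{2q_{k+1}}\le\|q_k\alpha\|\le\frac1{q_{k+1}}$.

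\emph{The lower limit.} The inequality $\ge1$ for a.e.\ $(x,y)$ is Corollary~\ref{cor:lowercont} applied at each $y$, followed by Fubini. For $\le1$ I would invoke Kim's theorem quoted in Remark~\ref{rem:gap}: for a.e.\ $z$ one has $\liminf_n n\|n\alpha-z\|=0$. Hence for such $z$ there are infinitely many $n$ with $\|n\alpha-z\|\le 1/n$. Taking $r=1/n$ gives $\tau_r(z)\le n$, so $\log\tau_r/(-\log r)\le1$ along $r\to0$.

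\emph{Upper bound $\limsup\le\eta$, for every $z$.} By the three-gap structure, the points $\{n\alpha\}$ with $1\le n\le q_k+q_{k-1}$ partition $\T$ into gaps of length at most $\|q_k\alpha\|+\|q_{k-1}\alpha\|\le 2/q_k$. Consequently $\tau_r(z)\le 2q_k$ whenever $r\ge 2/q_k$. Given small $r$, I choose the least $k$ with $2/q_k\le r$, so that $q_{k-1}<2/r$. Fix $\varepsilon>0$. For large $k$ the definition of $\eta$ gives $1/q_k\ge\|q_{k-1}\alpha\|\ge q_{k-1}^{-(\eta+\varepsilon)}$, i.e.\ $q_k\le q_{k-1}^{\eta+\varepsilon}$. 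Therefore $\tau_r(z)\le 2(2/r)^{\eta+\varepsilon}$, and letting $\varepsilon\to0$ gives $\limsup\le\eta$.

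\emph{Lower bound $\limsup\ge\eta$, for a.e.\ $z$.} This is the step I expect to be the main obstacle. Fix $\varepsilon>0$; by definition of $\eta$ there are infinitely many $k$ with $q_{k+1}\ge q_k^{\eta-\varepsilon}$. Fix large constants $C,h$ and put $M=q_{k+1}/C$ and $r=1/(hq_k)$. Writing $\alpha=p_k/q_k+\delta$ with $|\delta|<1/(q_kq_{k+1})$, every orbit point $n\alpha$ with $n\le M$ lies within $1/(Cq_k)$ of the lattice $q_k^{-1}\Z$. Hence the set of $z$ with $\tau_r(z)\le M$ is contained in the $(r+1/(Cq_k))$-neighbourhood of that lattice, which has measure at most $2/h+2/C$. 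So the events $E_k=\{\tau_r(z)>M\}$ satisfy $\mu(E_k)\ge1-2/h-2/C$, and by reverse Fatou $\mu(\limsup E_k)\ge1-2/h-2/C$. On $\limsup E_k$ one has
\[
\frac{\log\tau_r(z)}{-\log r}\ \ge\ \frac{(\eta-\varepsilon)\log q_k-\log C}{\log q_k+\log h}\ \longrightarrow\ \eta-\varepsilon .
\]

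To upgrade this positive-measure statement to full measure, I would note that $\tau_r(z+\alpha)$ and $\tau_r(z)$ differ by at most one step. Hence the set $\{z:\limsup\ge\eta-\varepsilon\}$ is invariant under $T_\alpha$, and ergodicity of the irrational rotation forces its measure to be $0$ or $1$. Since we have shown it is positive, it is $1$. Intersecting over rational $\varepsilon\downarrow0$ gives $\limsup\ge\eta$ a.e. The Liouville case $\eta=\infty$ is covered by the same argument with $q_{k+1}\ge q_k^{K}$ for arbitrary $K$.
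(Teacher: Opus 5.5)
The paper does not prove this statement. It quotes it from Kim--Seo \cite{KS} and uses it only as input for the sharpness discussion. Your argument is therefore an independent, self-contained route, and it is correct.

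The main steps are sound:
\begin{itemize}
\item the reduction to $z=y-x$;
\item the covering bound $\tau_r(z)\le q_k+q_{k-1}\le 2q_k$ for $r\ge 2/q_k$;
\item the observation that $\{z:\tau_{r_k}(z)\le q_{k+1}/C\}$ lies in a neighbourhood of $q_k^{-1}\Z$ of measure at most $2/h+2/C$.
\end{itemize}
These give slightly more than the statement. The upper bounds $\limsup\le\eta$ and $\liminf\le 1$ hold for \emph{every} pair $(x,y)$. Only the two lower bounds are almost-everywhere statements: one is the universal Corollary~\ref{cor:lowercont}, the other your covering estimate.

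Three points should be tightened.

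\textbf{(1) The invariance step.} It is not true that $\tau_r(z+\alpha)$ and $\tau_r(z)$ always differ by at most one: if $\|z\|\le r$ then $\tau_r(z+\alpha)=1$. The correct relation is $\tau_r(z+\alpha)=1+\tau_r(z)$ whenever $r<\|z\|$. For $z\neq 0$ this suffices for invariance of the limsup set. The ergodicity step is unnecessary anyway. The set $\{z:\limsup\ge\eta-\varepsilon\}$ does not depend on $C$ or $h$, and it contains $\limsup_k E_k$ for each choice. Hence its measure is at least $1-2/h-2/C$ for all $C,h$, i.e.\ it equals $1$.

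\textbf{(2) Kim's theorem is not needed.} Your own covering bound along $r_k=2/q_k$ already gives $\log\tau_{r_k}(z)/(-\log r_k)\le\log(2q_k)/\log(q_k/2)\to 1$ for every $z$. This yields $\liminf\le 1$ everywhere without any external input.

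\textbf{(3) Passing from $\eta$ to convergents.} The claim that $q_{k+1}\ge q_k^{\eta-\varepsilon}$ for infinitely many $k$ is not literally the definition of $\eta$, since the definition involves all integers $n$. To transfer approximations $\|n\alpha\|<n^{-\beta}$ to convergents you need the best-approximation property: $\|n\alpha\|\ge\|q_k\alpha\|\ge 1/(2q_{k+1})$ for $q_k\le n<q_{k+1}$. You should state this. The other direction, $q_k\le q_{k-1}^{\eta+\varepsilon}$ eventually, does follow directly from the definition together with $\|q_{k-1}\alpha\|\le 1/q_k$.

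With these adjustments your proof uses only Corollary~\ref{cor:lowercont} and elementary continued-fraction estimates. This makes transparent what the paper's bare citation hides: the arithmetic exponent $\eta$ enters the upper bound deterministically, through the growth of the $q_k$.
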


Since $\mu(B(y,r))=2r$, the exponents in Theorem~\ref{thm:ks} coincide
with $\underline{R}$ and $\overline{R}$ of \eqref{eq:exponents}. We now
draw the consequences for the results of this paper.

\subsection*{Consistency and sharpness}
Let $\alpha\in\Omega(s-1)$, so that $T_\alpha$ has $s$MSTP
(Theorem~\ref{thm:tseng}) and $\eta(\alpha)\le s$. Theorem~\ref{thm:ks}
gives $\underline{R}=1$ and $\overline{R}=\eta\le s$ almost everywhere,
in agreement with Theorems \ref{thm:main-liminf} and
\ref{thm:main-limsup}. Two sharpness statements follow.

First, the upper endpoint $s$ cannot be lowered. Choose $\alpha$
whose continued fraction denominators satisfy
$a_{n+1}\asymp q_n^{\,s-1}$, so that $\|q_n\alpha\|\asymp q_{n+1}^{-1}
\asymp q_n^{-s}$ along the convergents while
$\|k\alpha\|\ge\|q_n\alpha\|\ge c\,q_n^{-s}\ge c\,k^{-s}$ for
$q_n\le k<q_{n+1}$. Such $\alpha$ satisfy $\alpha\in\Omega(s-1)$ and
$\eta(\alpha)=s$. For these rotations $s$MSTP holds while
$\overline{R}=s$ a.e.\ by Theorem~\ref{thm:ks}. Consequently no theorem
can deduce $\overline{R}\le s'$ with $s'<s$, nor
$\underline{R}\in[1,s']$ with the pair of hypotheses weakened, from
$s$MSTP alone. In this sense the exponent $s$ in
Theorems~\ref{thm:main-liminf} and \ref{thm:main-limsup} is optimal.

Second, the lower endpoint $1$ is attained: $\underline{R}=1$ a.e.\
for every irrational rotation, so for $s>1$ the conclusion of
Theorem~\ref{thm:main-liminf} cannot be strengthened to
$\underline{R}=s$, the interval $[1,s]$ is, on the basis of the $s$MSTP
alone, the correct conclusion.

\subsection*{Liouville rotation numbers}
If $\eta(\alpha)=\infty$ then $\alpha\notin\Omega(\sigma)$ for every
$\sigma$, so by Theorem~\ref{thm:tseng} the rotation $T_\alpha$ has
no $s$MSTP for any finite $s$: the family
$\{s\mathrm{MSTP}\}_{s\ge1}$ does not exhaust ergodic circle rotations.
Nevertheless $\underline{R}=1$ a.e.\ persists
(Corollary~\ref{cor:lowercont} and Theorem~\ref{thm:ks}): the lower
waiting time exponent is blind to the failure of all shrinking target
properties, which manifests itself only in $\overline{R}=\infty$. Related
phenomena for the flexibility of $\liminf n\varphi(n)\|n\alpha-y\|$ over
all irrationals are studied in \cite{Kim}.
\\

%\subsection*{Open questions}
%On the circle, Theorems~\ref{thm:tseng} and \ref{thm:ks} show that
%$s$MSTP alone implies $\overline{R}\le s$ a.e. In general we only proved
%this under the $s$-strong hypothesis, which for $s>1$ is restrictive
%(Remark~\ref{rem:ssbc}). This suggests:

%\begin{question}\label{q:limsup}
%Does the $s$MSTP (at a point $y$, or globally) imply
%$\overline{R}(x,y)\le s$ for a.e.\ $x$? The question appears to be open
%already for $s=1$, i.e.\ whether MSTP implies the full waiting time
%problem \eqref{eq:wtp}; \cite{GK} establish it under the strong
%Borel-Cantelli hypothesis.
%\end{question}

%\begin{question}\label{q:liminf}
%Is Theorem~\ref{thm:main-liminf} attained strictly inside the interval:
%does there exist a system with $s$MSTP, $s>1$, a center $y$ and a
%positive measure set of $x$ with $\underline{R}(x,y)>1$? By
%Theorem~\ref{thm:ks} circle rotations never provide such an example.
%\end{question}

%\begin{question}\label{q:examples}
%Beyond toral translations, which natural classes of zero-entropy or
%parabolic systems (interval exchange transformations, flows on surfaces,
%skew products) satisfy $s$MSTP for some finite $s>1$ but not MSTP, and do
%the corresponding waiting time upper exponents realize values in $(1,s]$?
%The interval exchange results of \cite{Gal2,GK} concern the exponent-$1$
%regime along subsequences; an exponent-calibrated theory in the sense of
%Corollary~\ref{cor:converse-ae} for these systems seems not to have been
%worked out.
%\end{question}


\begin{thebibliography}{99}

\bibitem{BGI}
C.~Bonanno, S.~Galatolo and S.~Isola,
\emph{Recurrence and algorithmic information},
Nonlinearity \textbf{17} (2004), no.~3, 1057-1074.

\bibitem{Bos}
M.~Boshernitzan,
\emph{Quantitative recurrence results},
Invent. Math. \textbf{113} (1993), no.~3, 617-631.

\bibitem{CK}
N.~Chernov and D.~Kleinbock,
\emph{Dynamical Borel-Cantelli lemmas for Gibbs measures},
Israel J. Math. \textbf{122} (2001), 1-27.

\bibitem{Dol}
D.~Dolgopyat,
\emph{Limit theorems for partially hyperbolic systems},
Trans. Amer. Math. Soc. \textbf{356} (2004), 1637-1689.

\bibitem{Fay}
B.~Fayad,
\emph{Mixing in the absence of the shrinking target property},
Bull. London Math. Soc. \textbf{38} (2006), no.~5, 829-838.

\bibitem{FMP}
J.~L. Fern\'andez, M.~V. Meli\'an and D.~Pestana,
\emph{Quantitative mixing results and inner functions},
Math. Ann. \textbf{337} (2007), no.~1, 233-251.

\bibitem{Gal1}
S.~Galatolo,
\emph{Dimension via waiting time and recurrence},
Math. Res. Lett. \textbf{12} (2005), no.~3, 377-386.

\bibitem{Gal2}
S.~Galatolo,
\emph{Hitting time and dimension in Axiom A systems, interval exchanges
and an application to Birkhoff sums},
J. Stat. Phys. \textbf{123} (2006), 111-124.

\bibitem{Gal3}
S.~Galatolo,
\emph{Dimension and hitting time in rapidly mixing systems},
Math. Res. Lett. \textbf{14} (2007), no.~5, 797-805.

\bibitem{GK}
S.~Galatolo and D.~H. Kim,
\emph{The dynamical Borel-Cantelli lemma and the waiting time problems},
Indag. Math. (N.S.) \textbf{18} (2007), no.~3, 421-434.

\bibitem{HLV}
N.~Haydn, Y.~Lacroix and S.~Vaienti,
\emph{Hitting and return times in ergodic dynamical systems},
Ann. Probab. \textbf{33} (2005), no.~5, 2043-2050.

\bibitem{HV}
R.~Hill and S.~Velani,
\emph{The ergodic theory of shrinking targets},
Invent. Math. \textbf{119} (1995), 175-198.

\bibitem{Kim}
D.~H. Kim,
\emph{The shrinking target property of irrational rotations},
Nonlinearity \textbf{20} (2007), no.~7, 1637-1643.

\bibitem{KimIM}
D.~H. Kim,
\emph{The dynamical Borel-Cantelli lemma for interval maps},
Discrete Contin. Dyn. Syst. \textbf{17} (2007), no.~4, 891-900.

\bibitem{KS}
D.~H. Kim and B.~K. Seo,
\emph{The waiting time for irrational rotations},
Nonlinearity \textbf{16} (2003), no.~5, 1861-1868.

\bibitem{KM}
D.~Kleinbock and G.~Margulis,
\emph{Logarithm laws for flows on homogeneous spaces},
Invent. Math. \textbf{138} (1999), 451-494.

\bibitem{Kur}
J.~Kurzweil,
\emph{On the metric theory of inhomogeneous diophantine approximations},
Studia Math. \textbf{15} (1955), 84-112.

\bibitem{Phi}
W.~Philipp,
\emph{Some metrical theorems in number theory},
Pacific J. Math. \textbf{20} (1967), no.~1, 109-127.

\bibitem{Tse}
J.~Tseng,
\emph{On circle rotations and the shrinking target properties},
Discrete Contin. Dyn. Syst. \textbf{20} (2008), no.~4, 1111-1122.

\end{thebibliography}
\end{document}